\newtheorem{theorem}{Theorem}
\newtheorem{proposition}{Proposition}
\newtheorem{corollary}{Corollary}
\newtheorem{lemma}{Lemma}
\newcommand{\R}{{\mathbb R}}
\newcommand{\Z}{{\mathbb Z}}
\newcommand{\C}{{\mathbb C}}
\newcommand{\set}[2]{ \left\{ #1 \ \left| \ #2 \right. \right\} }
\newtheorem*{theoremst}{Theorem 6.4}
\newcommand{\B}{{\mathbb B}}
\newcommand{\FS}{C^m(\B)}
\title{Scalar oscillatory integrals in smooth spaces of homogeneous type}
\author{Philip T. Gressman}
\begin{document}
\maketitle

\begin{abstract}
We consider a generalization of the notion of spaces of homogeneous type, inspired by recent work of Street \cite{street2011} on the multi-parameter Carnot-Carath\'{e}odory geometry, which imbues such spaces with differentiability structure.  The setting allows one to formulate estimates for scalar oscillatory integrals on these spaces which are uniform and respect the underlying geometry of both the space and the phase function.  As a corollary we obtain a generalization of a theorem of Bruna, Nagel, and Wainger \cite{bnw1988} on the asymptotic behavior of scalar oscillatory integrals with smooth, convex phase of finite type.
\end{abstract}

Given a manifold $\Omega$ and a measure of smooth density $\mu$, a frequent problem in analysis is to establish an estimate for scalar oscillatory integrals of the form
\begin{equation} \int_{\Omega} e^{if} \psi d \mu \label{estimatethis} \end{equation}
where the phase $f$ is real-valued and the amplitude $\psi$ is supported on a set of finite measure.  We would like the estimates to be uniform in $f$ and $\psi$ and to effectively reduce the problem of estimating this integral to a sublevel set problem for the gradient of $f$ (since the method of stationary phase dictates that there will be substantial cancellation away from critical points of $f$).  Ideally, this objective should be accomplished in a geometrically-invariant way if at all possible, although to date this has proven to be a difficult task to accomplish, especially in dimension greater than one.  We would also like to assume as little as possible about $\psi$, as it is generally regarded to be of secondary importance when contrasted with the phase.

A tremendous amount of work has already been devoted to understanding problems in the form \eqref{estimatethis} and related objects in higher dimensions.  If one is willing to compromise somewhat on uniformity requirements (and, for example, restrict attention phases $f$ which are scalar multiples of a single fixed phase function $\Phi$ and simple perturbations of such phases), methods based on resolution of singularities provide extremely powerful tools for understanding \eqref{estimatethis}.  The seminal result in this direction is due to Var{\v{c}}enko \cite{varcenko1976}; the history of this field is lengthy and we will not try to summarize it here, but we will note that the emphasis of some of the most recent work, due to Collins, Greenleaf, and Pramanik \cite{cgp2010} as well as Greenblatt \cite{greenblatt2008}, has been to produce resolution of singularities algorithms which are much more concrete (and more easily applied by non-specialists) than were previously available; these new algorithms are also able to handle phase functions which fell outside the scope of earlier work for technical reasons.  Answering questions of uniformity by means of resolution of singularities is still generally a difficult task.   It is also worth noting that, somewhat paradoxically, resolution of singularities methods tend to encounter added difficulties when the decay rate of scalar oscillatory integrals is relatively high.
Work in the complementary direction, emphasizing uniformity of some form or another (and sacrificing on sharpness of the estimates if it becomes necessary) also abounds: see, for example,
Carbery, Christ, and Wright \cite{ccw1999};  Carbery and Wright \cite{cw2002};
Greenblatt \cite{greenblatt2011};  Iosevich \cite{iosevich1999}; 
Ikromov, Kempe, and M\"{u}ller \cite{ikm2007}; Karpushkin \cite{karpushkin1986}; Phong and Stein \cite{ps1998};  Phong, Stein, and Sturm \cite{pss1999,pss2001}; Phong and Sturm \cite{ps2000};
Pramanik and Yang \cite{py2004}; 
Rogers \cite{rogers2005}; Seeger \cite{seeger1998}; and the author \cite{gressman2008,gressman2010II}.  The goals, ideas, and methods to be found in these results are numerous and diverse.

If we further narrow attention to uniform estimates of \eqref{estimatethis} which display some degree of geometric invariance, one is quickly left with a rather short list of known results.  Of these we highlight the work of Bruna, Nagel, and Wainger \cite{bnw1988}, in which they succeed in estimating the decay of the Fourier transform of a smooth, convex surface of finite type in terms of the volume growth rates of surface caps.
The goal of the present paper is to establish a theorem in the same spirit as the theorem of Bruna, Nagel, and Wainger while removing the convexity assumption (and, in some sense, the finite type assumption).  It turns out that there is a natural way to expand the familiar definition of spaces of homogeneous type, originally due to Coifman and Weiss \cite{cw1971}, to include a compatible differential structure.  This was already observed in some sense by Bruna, Nagel, and Wainger; however, the important point here is that the construction can be completely divorced from a fine-structure analysis of the phase and does not, for example, require convexity.  Using this new structure, we establish a natural estimate of \eqref{estimatethis} which is both uniform and firmly tied to the underlying geometry of the manifold and the phase.  The new smoothness hypotheses added to spaces of homogeneous type are intuitive and, for the most part, have already been shown to hold in many of the familiar cases.  In particular, the smoothness hypotheses are satisfied in Carnot-Carath\'{e}odory geometries, as is shown in the thread of papers beginning with Nagel, Stein, and Wainger \cite{nsw1985}; including Tao and Wright \cite{tw2003}; and culminating with Street \cite{street2011}.

We begin the definitions by assuming only that $\Omega$ is a topological space and $\mu$ is a Borel measure on $\Omega$.  The familiar axioms associated with spaces of homogeneous type begin with a family of balls $B_j(x) \subset \Omega$.  One's intuition should be that the balls are geometrically nice sets containing $x$, but note that we will explicitly avoid the assumption they are open.   Here $x$ may be any point in $\Omega$, and in this paper the index $j$ will be contained in $\Z^d$; this should be thought of as corresponding to Street's ``multiparameter'' setting of Carnot-Carath\'{e}odory geometry (although the main theorems of this paper have interesting new consequences even in the case $d=1$ corresponding to single parameter geometry). For technical reasons, we will allow the possibility that $B_j(x) = \emptyset$ for certain values of the parameter $j$; we will say that such balls do not exist or are not defined.  In all other cases, (i.e., when $B_j(x) \neq \emptyset$), it will be required that $x \in B_j(x)$. Regarding the index $j \in \Z^d$ (referred to as the scale of the ball), we will use the standard notations that $j' \leq j$ when the corresponding inequality holds for each coordinate of $j'$ and $j$ in $\Z^d$, and we will define $|j-j'|$ to be the $\ell^\infty$-norm on $\Z^d$.  We will also identify the integers $\Z$ with the diagonal subset of $\Z^d$, i.e., $n = (n,\ldots,n)$.

The (mostly) familiar assumptions regarding the geometry of these balls are recorded here.   We suppose that for some open $\Omega_0 \subset \Omega$, we have the following:
\begin{enumerate}
\item[i.] (Compatibility) If $B_{j'}(x') \cap B_j(x) \neq \emptyset$ for some $x, x' \in \Omega_0$ and $j,j' \in \Z^d$, then $B_{j-1}(x')$ exists.
%\item[ii.] (Nesting) If $B_j(x) \neq \emptyset$ for some $x \in \Omega_0$ and $j \in \Z^d$, then $j' \leq j-1$ implies $B_{j'}(x) \subset B_{j}(x)$.
\item[ii.] (Engulfing) Suppose $B_{j}(x)$ is defined for some $x \in \Omega_0$ and $j \in \Z^d$. If there exists $x' \in \Omega_0$ and $j' \leq j-1$ such that $B_{j-1}(x) \cap B_{j'}(x') \neq \emptyset$, then $B_{j'}(x') \subset B_{j}(x)$.
\item[iii.] (Weak Doubling) There is a finite constant $C$ such that any distinct points $x_1,\ldots,x_N$ of $\Omega_0$ and any index $j \in \Z^d$ with $B_{j}(x_k) \cap B_{j}(x_l) = \emptyset$ for all $k \neq l$ have the property that at most $C$ of these points satisfy $B_{j+1}(x) \cap B_{j+1}(x_k) \neq \emptyset$ for any fixed $x \in \Omega_0$.
\end{enumerate}
Next we impose additional smoothness structure.  For each nonempty ball $B_j(x)$, we assume that there is a homeomorphism $\Phi_{j,x} : \B^{d_x} \rightarrow B_j(x)$ which maps $0$ to $x$, where $\B^{d_x}$ is the open Euclidean unit ball in dimension $d_x$.  The dimension $d_x$ may depend on $x$, but we assume that any two balls which intersect have the same dimension.  We will also assume that the supremum over $x$ of $d_x$ is finite; it will be referred to as the dimension when no confusion will arise.  We will abuse notation and use $\B$ to refer to the unit ball in Euclidean space of appropriate dimension (depending on context).  In a nutshell, we will assume that these homeomorphisms are smooth with respect to each other when compared on two comparable balls.   Specifically we assume:
\begin{enumerate}
\item[iv.] (Smooth Nesting) For some universal $c < 1$, \[ \sup_{t \in \B} |\Phi_{j,x}^{-1} \circ \Phi_{j-1,x}(t)| \leq c. \]
\item[v.] (Smooth Engulfing) If $B_{j}(x) \cap B_{j'}(x') \neq \emptyset$, then there is a constant $C_{|j-j'|}$ depending only on $|j-j'|$, dimension, and $m$ such that
\begin{equation} \left| \partial^{\alpha}_t \left[ \Phi_{j,x}^{-1} \circ \Phi_{j',x'}(t) \right] \right| \leq C_{|j-j'|}  \label{smeng}
\end{equation}
uniformly as $t$ ranges over $\Phi_{j',x'}^{-1}(B_{j}(x) \cap B_{j'}(x'))$ and $\alpha$ ranges over all multiindices of order at most $m$.  
\end{enumerate}
%We note in particular that the last assumption is the main reason for allowing failure at large scales (since, in some sense, we cannot expect large balls to be uniformly close to flat).
Under these assumptions, it is possible to quantify the smoothness of a function $f$ at any particular point $x$ and any given scale $j$.  We specifically define
\begin{equation} |d^k_x f|_j := \sup_{1 \leq |\alpha| \leq k} \left| \left. \partial^{\alpha}_t \left[ f \circ \Phi_{j,x} (t) \right] \right|_{t=0} \right| \label{pointsmdef} \end{equation}
for any $k = 1,\ldots,m$.  We also denote $|d^1_x f|_j$ by $|d_x f|_j$ when no confusion will arise.  Under the assumptions above, the quantity $|d_x f|_j$ satisfies a sort of weak differential invariance property: namely, that compositions of $f$ with ``tame'' diffeomorphisms (measured by composing with $\Phi_{j,x}$ for each $j$ and $x$) will preserve the magnitude of $d_x f$ at scale $j$ up to a bounded factor.  That this sort of weak invariance is the best that may be hoped for can be seen by considering the effect of a rough (in this case, $C^1$) change of variables in the integral \eqref{estimatethis}.
If such a rough transformation is made, the value of the integral will remain constant, but the method of stationary phase will fail for technical reasons.  Thus the only way to deduce decay for such an integral would to understand the very precise coincidence of the {\it irregularity} of both $f$ and $\psi$.  Typically for applications one would like to assume as little as possible about the function $\psi$.  Thus we are necessarily constrained to consider only the effect of tame diffeomorphisms.

The final set of assumptions we make concern the (Borel) measure $\mu$ and its regularity with respect to the balls $B_j(x)$.  To that end,  a set $L \subset \Omega$ is called a leaf when it has the following properties:
\begin{enumerate}
\item Every ball $B_j(x)$ with $x \in \Omega_0$ which intersects $L$ is contained in $L$.
\item Every ball contained in $L$ is relatively open in $L$.
\item $L$ has a countable dense subset.
\end{enumerate}
We will assume that $\Omega$ is equipped with a measure $\mu$ which may be ``factored'' onto the leaves in the following sense: 
\begin{enumerate}
\item[vi.] (Regularity of Measure) There is a collection of leaves $\mathcal F$ (i.e., a foliation) with measure $\mu_{\mathcal F}$ and Borel measures $\mu_L$ on each leaf $L \in {\mathcal F}$ such that
\[ \int_{\Omega} f d \mu = \int_{\mathcal F} \left[ \int_L f d \mu_L \right] d \mu_{\mathcal F}(L) \]
for all Borel measurable functions $f$ on $\Omega$.  Each measure $\mu_L$ should have smooth density, meaning that for any leaf $L$ and any ball $B_j(x) \subset L$ with $x \in \Omega_0$ and $\mu_L(B_j(x)) \neq 0$, there is a nonvanishing function $J_{j,x}$ such that 
\begin{equation} \frac{1}{\mu_L(B_j(x))} \int_{B_j(x)} f d \mu_L = \int_{\B} f \circ \Phi_{j,x}(t) J_{j,x}(t) dt \label{jacobian} \end{equation}
with $||J_{j,x}||_{\FS} \leq C$ and $\inf_{t \in \B} J_{j,x}(t) \geq c$ uniformly in $j$, $x$, and $L$.
\end{enumerate}
We must make the technical assumption that the balls $B_j(x)$ are Borel measurable and the maps
\[ x \mapsto \Phi_{j,x}(u) \]
for fixed $j$ and $u$ are defined on a Borel measurable set and are Borel measurable functions there.
\begin{theorem}
Assume that (i) through (vi) hold.  Fix any $\epsilon \in (0,1)$, and suppose $m \geq 2$.  Let $E \subset \Omega_0$ consist of those points where $d_x f \neq 0$, and suppose $R : E \rightarrow \Z^d$ some Borel measurable function such that $B_{R(x)+1}(x)$ is well-defined for each $x \in E$ and each of the following conditions hold: \label{maintheorem}
\begin{align}
 B_{R(x)}(x) \cap B_{R(y)}(y) & \neq \emptyset \Rightarrow |R(x) - R(y)| \lesssim 1, \label{lipschitz} \\
 |d^{m}_x f|_{R(x)} & \lesssim \sum_{k=1}^{m-1} \epsilon^{k-m} |d_x^k f |_{R(x)}, \label{highderiv} \\
 \sup_{y \in B_{R(x)}(x)} \epsilon |d_y f|_{R(y)} & \lesssim 1 +  \inf_{y \in B_{R(x)}(x)} \epsilon |d_y f|_{R(y)}  \label{firstderiv}
\end{align}
with implicit constants uniform with respect to $x,y \in E$ and $\epsilon$.
Then for any smooth, bounded $\psi$ whose support has finite measure in $\Omega_0$, there is another function $\psi_m$ such that
\[ \int_{\Omega_0} e^{if} \psi d \mu = \int_{\Omega_0} e^{if} \psi_m d \mu \]
and
\begin{equation} |\psi_m(x)| \lesssim \frac{ \sum_{k=0}^{m-1} \epsilon^k |d^{k}_x \psi(x)|_{R(x)}}{(1 + \epsilon |d_x f(x)|_{R(x)})^{m-1}}. \label{mainineq}
\end{equation}
\end{theorem}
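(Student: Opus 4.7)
The plan is to establish (\ref{mainineq}) by a localized non-stationary phase argument: cover $E$ by the stopping-scale balls $B_{R(x_k)}(x_k)$, pull each local contribution back to $\B$ via the chart $\Phi_{R(x_k),x_k}$, and there integrate by parts $m-1$ times against $e^{iF}$, where $F = f\circ\Phi$. Outside $E$ one simply takes $\psi_m = \psi$; the pointwise bound is only meaningful on $E$, where $R$ is defined.

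The first step is to build the cover. Select a maximal family $\{x_k\}\subset E$ with the balls $B_{R(x_k)}(x_k)$ pairwise disjoint; (\ref{lipschitz}) together with engulfing (ii) then shows the slightly enlarged balls $B_{R(x_k)-C_0}(x_k)$ cover $E$ for a suitable universal $C_0$, and weak doubling (iii) gives bounded overlap. A subordinate partition of unity $\{\chi_k\}$ is built by pulling back fixed bump functions through the charts, and smooth engulfing (v) with (\ref{lipschitz}) yields $\epsilon^j|d^j_x\chi_k|_{R(x)}\lesssim 1$ for $j\le m-1$ and $x\in\supp\chi_k$. Writing $\int_{\Omega_0}e^{if}\psi\,d\mu = \sum_k\int e^{if}\chi_k\psi\,d\mu$, (vi) reduces each summand to an average of leaf integrals which after pullback through $\Phi_{R(x_k),x_k}$ become $\int_{\B}e^{iF}UJ\,dt$ with $U = (\chi_k\psi)\circ\Phi$ and $J = J_{R(x_k),x_k}$. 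Taylor's theorem with (\ref{firstderiv}) and (\ref{highderiv}) shows $|\nabla_tF(t)|\approx|d_{x_k}f|_{R(x_k)}$ uniformly on $\B$, and higher derivatives of $F$ up to order $m$ are controlled via the rescaling in (\ref{highderiv}).

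The heart of the argument is an $\epsilon$-rescaled non-stationary phase. Set $Lv := (v - i\epsilon^2\nabla_tF\cdot\nabla_tv)/(1+\epsilon^2|\nabla_tF|^2)$, which satisfies $Le^{iF}=e^{iF}$, and apply its transpose $L^*$ exactly $m-1$ times to $UJ$. A direct computation, using (\ref{highderiv}) to bound $\epsilon^2|\nabla^2F|$ by $\epsilon|\nabla_tF|$ and hence $\epsilon^2\nabla^2F/(1+\epsilon^2|\nabla_tF|^2)$ by a constant multiple of $1/(1+\epsilon|\nabla_tF|)$, yields $|L^*w|\lesssim(|w|+\epsilon|\nabla_tw|)/(1+\epsilon|\nabla_tF|)$; iterating $m-1$ times and identifying $|\nabla^j_tU|$ at the center with $|d^j_{x_k}(\chi_k\psi)|_{R(x_k)}$ gives $|(L^*)^{m-1}(UJ)|\lesssim(\sum_{j=0}^{m-1}\epsilon^j|d^j_{x_k}(\chi_k\psi)|_{R(x_k)})/(1+\epsilon|d_{x_k}f|_{R(x_k)})^{m-1}$, with derivatives of $J$ controlled by (vi). Pushing back through $\Phi^{-1}$ gives $\psi_m^{(k)}$ supported in $B_{R(x_k)}(x_k)$; set $\psi_m=\sum_k\psi_m^{(k)}$ on $E$. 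Bounded overlap leaves $O(1)$ nonzero summands at each $x$, and a chain-rule argument using (v) with (\ref{lipschitz}) shows $\epsilon^j|d^j_xg|_{R(x)}\approx\epsilon^j|d^j_{x_k}g|_{R(x_k)}$ whenever $x\in B_{R(x_k)}(x_k)$, converting the estimate centered at $x_k$ into (\ref{mainineq}).

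The main obstacle I expect is the combinatorial induction in iterating $L^*$. Each application spawns, via the Leibniz rule, a sum of terms involving products of derivatives of $F$ of various orders divided by powers of $1+\epsilon^2|\nabla_tF|^2$, and the rescaling weights $\epsilon^{k-m}$ in (\ref{highderiv}) are precisely tailored so that, after $m-1$ iterations, these many monomials reorganize into the clean form $\sum_{j=0}^{m-1}\epsilon^j|d^j\psi|_R$ in the numerator and $(1+\epsilon|df|_R)^{m-1}$ in the denominator. Verifying this cleanly requires a careful induction on the iteration count, keyed to an explicit family of monomials in the $\nabla^k F$ that may appear at each stage.
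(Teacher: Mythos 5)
Your overall architecture (stopping-time cover at the scales $R$, pullback to the Euclidean ball, repeated integration by parts against $e^{iF}$, reassembly with bounded overlap) matches the paper's, and your regularized operator $L$ is a reasonable variant of the paper's Lemma \ref{balllemma}. But there is a genuine gap at the analytic heart of the argument: you claim to use \eqref{highderiv} "to bound $\epsilon^2|\nabla^2F|$ by $\epsilon|\nabla_tF|$" and, more generally, to control the derivatives of $F$ of orders $2,\ldots,m$ needed to iterate $L^*$. Hypothesis \eqref{highderiv} does no such thing: it bounds only the \emph{top} ($m$-th order) derivative at a point, and only by the weighted sum $\sum_{k=1}^{m-1}\epsilon^{k-m}|d^k_xf|_{R(x)}$ of \emph{all} lower-order derivatives at that point. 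For $m>2$ it says nothing about the second (or any intermediate) derivative relative to the gradient, and nothing about points of the ball other than the center. Your claimed estimate $|L^*w|\lesssim(|w|+\epsilon|\nabla_t w|)/(1+\epsilon|\nabla_tF|)$, and its $(m-1)$-fold iteration, therefore do not follow from the stated hypotheses. The missing ingredient is precisely the paper's Theorem \ref{finitetype}: a quantitative finite-type theorem proved by a Littlewood--Paley-type construction on the ball, which upgrades the condition \eqref{sharpcond} (what \eqref{highderiv} yields at every point of the ball after invoking \eqref{lipschitz} and the comparability \eqref{comparable}) to the uniform bound \eqref{bigfinish} on \emph{all} derivatives of order $\le m$ in terms of first derivatives, with the correct powers of $\epsilon$. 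This is a substantive theorem, not the "combinatorial induction" bookkeeping you flag as the main obstacle; without it (or an equivalent bootstrapping argument) the proof fails for every $m>2$.

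Two secondary points. First, your claim that $|\nabla_tF(t)|\approx|d_{x_k}f|_{R(x_k)}$ uniformly on $\B$ is false: \eqref{firstderiv} only gives $1+\epsilon|d_yf|_{R(y)}\approx1+\epsilon|d_xf|_{R(x)}$ across a ball, i.e., comparability of the gradient only when it is large compared to $\epsilon^{-1}$ (the paper exploits exactly this via a two-case split, integrating by parts only when $\epsilon|d_yf|_{R(y)}\gtrsim1$); your regularized $L$ can absorb this, but only if stated in the "$1+$" form, and likewise your final comparability statement $|d^j_xg|_{R(x)}\approx|d^j_{x_k}g|_{R(x_k)}$ conflates derivatives at two different points --- what is true and needed is comparability of derivatives at the \emph{same} point $x$ measured in the two charts, as in \eqref{comparable}. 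Second, the cover and partition of unity are glossed: the collection of stopping balls need not be countable, and the interchange of sum and integral must be routed through the leaf factorization (vi); the paper's Lemma \ref{partlemma} and the identity \eqref{lebesgue} exist precisely to handle this, so some substitute argument is required rather than an unqualified "$\int=\sum_k\int$".
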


%We will say that two systems of balls $\{(B_j(x), \Phi_{j,x})\}$ and $\{(B'_j, \Phi'_{j,x})\}$ are comparable when the maps $\Phi_{j,x}' \circ \Phi_{j,x}^{-1}$ and their inverses are uniformly in $C^m$ at all points where this composition makes sense (and implicitly we assume that the dimensions of the corresponding balls are the same in the two systems), and  $B_{j-2N}(x) \subset B'_{j-N}(x) \subset B_{j}(x)$ for each $j$ (with appropriate modification when one or more balls in this chain is empty) and some fixed $N$.
%
%The compatibility condition is merely a minimal requirement on the scales at which a ball $B_{j}(x)$ must be defined: if it is possible to make sense of the ball $B_j(x)$ and it obeys (ii) through (vii), then it should be possible to define balls $B_{j-1}(x')$ at all points $x'$ which are ``near'' $x$ when viewed from the scale $j$.
%
%The first assumption merely dictates that the balls should be defined uniformly (i.e., near a well-defined ball of scale $j$, it will always be possible to define balls of scale at least $j-1$).
%We call the last assumption the weak doubling hypothesis since it can be easily established in the presence of a doubling measure inequality (also exploiting engulfing) if one knows {\it a priori} that the measures of balls are never zero or infinite.

Informally, the theorem establishes a more geometric version of the method of stationary phase, meaning that the usual gradient factor appears in the denominator, but only when measured at the correct scale at each point.  The condition \eqref{lipschitz} can be thought of as a sort of Lipschitz condition for the scales (meaning that if you measure on a very fine scale at $x$, you must also measure at fine scales when relatively near $x$).
The inequality \eqref{highderiv} plays the role of a finite type condition, but it should not be understood literally as such, since it only holds on balls away from the set where $d_x f = 0$.  In particular, in most applications an $\epsilon$ is guaranteed to exist satisfying \eqref{highderiv} uniformly provided only that the ball $B_{R(x)}(x)$ is sufficiently small at each point.   Elementary examples show that this can, in fact, be achieved even in some cases when $f$ is not of finite type.

One interesting corollary of theorem \ref{maintheorem} concerns the original inequality of Bruna, Nagel, and Wainger concerning scalar oscillatory integrals with convex phases.  Using theorem \ref{maintheorem}, we may extend this earlier estimate in two ways.  The first is that we will have a slightly stronger sort of uniformity than was originally available---we will explicitly identify quantities that determine the values of the implicit constants and we will not, for example, assume that the domain of integration is compact.  We will also explicitly identify the number of derivatives necessary for estimates to hold (so one need not actually assume that the convex phase is $C^\infty$). The second (and perhaps more interesting) extension is that the finite-type assumption will be replaced by a strictly weaker one which in some cases includes convex functions which are not finite type (and even phases not strictly convex).

Let us begin by identifying the substitute notion of regularity we will employ.  Let $f \in C^m([0,T])$ for some $m \geq 2$ be a nonnegative convex function with $f(0) = 0$ and $f'(0) \geq 0$.  We say that $f$ is tame on $[0,T]$ to order $m$ when there is a constant $C < \infty$ such that for each $k=2,\ldots,m$ and each $t \in [0,T]$ we have
\begin{equation} |f(t)|^{k-1} |f^{(k)}(t)| \leq C |f'(t)|^k. \label{tame} \end{equation}
Although the tameness inequality may at first seem unusual, it is a natural way to measure regularity of $f$ in a way that is invariant under rescaling the magnitude of $f$ and rescaling the time parameter $t$:  for any positive $\alpha,\beta$, the same constant $C$ will be possible for the corresponding inequalities applied to the function $g(t) := \alpha f (\beta^{-1} t)$ on the interval $[0, \beta T]$.  In fact, we may reinterpret \eqref{tame} as follows: if the horizontal and vertical axes of the graph of $f$ are rescaled in such a way that the rescaled function $g$ satisfies $g(t_0) = g'(t_0) = 1$ at some point $t_0$, then the higher derivatives of $g$ at $t_0$ (and, hence, the coefficients of the Taylor polynomial at $t_0$) will be controlled by the constant $C$.

Two important features of \eqref{tame} should be highlighted at this point.  The first is that for polynomial $f$, the constant $C$ appearing in \eqref{tame} can be taken to depend only on the degree of $f$ (a fact which shall be explicitly addressed in section \ref{bnwsec}).  The second is that \eqref{tame} does not imply that $f$ is finite type (or even strictly convex).  For example, a trivial induction argument establishes that $f(t) := e^{-(1 + \alpha^{-1}) t^{-\alpha}}$ is tame to any finite order on $[0,1]$ for any positive $\alpha$.

Definitions now established, we have the following corollary of theorem \ref{maintheorem}:
\begin{corollary}
Suppose $f$ is a convex function on an open convex set $\Omega \subset \R^d$ containing the origin.  Assume that $f$ and its gradient vanish at the origin.  If $f$ is uniformly radially tame to order $m$ on $U$ (meaning that $f$ is tame with the same constant on all rays beginning at the origin), then for any compactly supported $\psi \in C^m(\Omega)$ and any real $\lambda$, \label{bnwcor}
\begin{equation} \left| \int_{\Omega} e^{i \lambda f(x)} (f(x))^\ell \psi(x) dx \right| \lesssim \int_{\Omega} \frac{|f(x)|^\ell \sum_{k=0}^{m-1} |(x \cdot \nabla)^k \psi(x)|  }{(1 +  |\lambda f(x)|)^{m-1}} dx \label{sublev} \end{equation}
with implicit constant depending only on $m$, $d$, the nonnegative integer $\ell$, and the constant of uniform radial tameness.
\end{corollary}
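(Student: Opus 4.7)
The plan is to apply Theorem~\ref{maintheorem} to a one-dimensional radial reduction of the integral and integrate in the angular variable. Writing $x = tv$ with $v \in S^{d-1}$, Fubini gives
\[ \int_\Omega e^{i\lambda f(x)} f(x)^\ell \psi(x)\, dx \;=\; \int_{S^{d-1}} \int_0^{T_v} e^{i\lambda g_v(t)} g_v(t)^\ell \psi(tv)\, t^{d-1}\,dt\, d\sigma(v), \]
where $g_v(t) := f(tv)$ and $T_v$ is the exit time from $\Omega$. By the uniform radial tameness hypothesis each $g_v$ is convex and tame to order $m$ with a common constant, so it suffices to establish a uniform one-dimensional estimate on each ray. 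I drop the subscript $v$; any initial interval on which $g \equiv 0$ contributes only in the $\ell=0$ case and is handled directly by triangle inequality, so I may assume $g, g' > 0$ on $(0,T_v)$.

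For the one-dimensional problem I construct a space of homogeneous type using logarithmic-scale intervals: fix a small $c_0 \in (0,1)$ and set
\[ \Phi_{j,t_0}(\tau) := t_0(1 + c_0 2^j \tau), \qquad B_j(t_0) := \Phi_{j,t_0}((-1,1)), \]
whenever this interval is contained in $(0,T_v)$. The foliation is trivial, with one leaf carrying the measure $t^{d-1}\,dt$; a direct computation shows the density under $\Phi_{j,t_0}$ is smooth, bounded above, and bounded below, verifying axiom (vi). Axioms (i)--(v) follow from elementary dyadic-interval arithmetic together with $\Phi_{j,t_0}^{-1} \circ \Phi_{j-1,t_0}(\tau) = \tau/2$. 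The key bookkeeping identity is
\[ |d^k_{t_0} h|_j \;\asymp\; (c_0 2^j)^k\,|D^k h(t_0)|, \qquad D := t\partial_t, \]
so the scale-$j$ intrinsic derivatives match $(c_0 2^j)^k$ times the Euler radial derivatives of $h$, and since $D^k\bigl(\psi(tv)\bigr) = \bigl((x\cdot\nabla)^k\psi\bigr)(tv)$, this matches the form on the right-hand side of \eqref{sublev}.

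The central analytic input is an iterated tameness bound: induction on $k$ from \eqref{tame} yields $t^k|g^{(k)}(t)| \lesssim (tg'(t))^k/g(t)^{k-1}$ for $1 \leq k \leq m$, and hence $|D^k g(t)| \lesssim |Dg(t)|^k/g(t)^{k-1}$. I pick $\epsilon \in (0,1)$ small depending on $m$, $\ell$, and the tameness constant, and set $R(t)$ to be the largest integer for which $\epsilon c_0 2^{R(t)} tg'(t) \leq g(t)$, further capped above to guarantee $B_{R(t)+1}(t) \subset (0,T_v)$ on the support of the amplitude. Convexity gives $g(t) \leq tg'(t)$, ensuring such integers exist. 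This choice produces $\epsilon|d_t(\lambda g)|_{R(t)} \asymp |\lambda g(t)|$, hence $(1 + \epsilon|d_t(\lambda g)|_{R(t)})^{m-1} \asymp (1 + |\lambda g(t)|)^{m-1}$; combined with the iterated tameness bound it gives $\epsilon^k|d^k_t(\lambda g)|_{R(t)} \lesssim |\lambda g(t)|$ for each $1 \leq k \leq m$, so hypothesis \eqref{highderiv} follows by comparison with the $k=1$ term. The main obstacle is verifying the Lipschitz condition \eqref{lipschitz} and the first-derivative comparability \eqref{firstderiv}, both of which reduce to showing that $g$ and $tg'$ vary by only bounded factors across $B_{R(t)}(t)$: this comes from integrating the logarithmic form of the $k=2$ tameness inequality $(g'/g)'(t) \leq (C-1)(g'/g)(t)^2$ over $B_{R(t)}(t)$, whose width is comparable to $g(t)/g'(t)$, and choosing $\epsilon$ small enough relative to the tameness constant $C$ so that the integrated increment in $g'/g$ is controlled.

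With the hypotheses verified, Theorem~\ref{maintheorem} applied with phase $\lambda g$ and amplitude $t \mapsto g(t)^\ell \psi(tv)$ produces a replacement $\psi_m(t)$ whose pointwise bound \eqref{mainineq} has denominator $\asymp (1 + |\lambda g(t)|)^{m-1}$. Expanding $|d^k_t(g^\ell \psi(tv))|_{R(t)}$ by the Leibniz rule and absorbing each factor of the form $\epsilon^i(c_0 2^{R(t)})^i|D^i g(t)|$ via the iterated tameness bound (each such factor contributes $\lesssim g(t)$), one obtains
\[ \sum_{k=0}^{m-1}\epsilon^k\bigl|d^k_t(g^\ell \psi(tv))\bigr|_{R(t)} \;\lesssim\; g(t)^\ell \sum_{k=0}^{m-1}\bigl|(x\cdot\nabla)^k\psi(tv)\bigr|. \]
Multiplying \eqref{mainineq} by $t^{d-1}$, integrating over $t \in (0,T_v)$ and $v \in S^{d-1}$, and converting back to a Euclidean integral over $\Omega$ produces \eqref{sublev}, with constants depending only on $m$, $d$, $\ell$, and the tameness constant.
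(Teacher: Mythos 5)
Your route is essentially the paper's own: the paper also reduces to rays through the origin (its regularity-of-measure axiom is verified there precisely by the polar-coordinate factorization you invoke via Fubini), uses multiplicative intervals along each ray (parametrized as $\Phi_{j,x}(t)=e^{3^jt}x$ rather than your affine $t_0(1+c_02^j\tau)$), chooses the scale so that the ball width is comparable to $f/(x\cdot\nabla f)$ in multiplicative units, verifies \eqref{lipschitz} and \eqref{firstderiv} by integrating the logarithmic-derivative consequence of the $k=2$ case of \eqref{tame} along the ray, deduces $|d^k_xf|_{R(x)}\approx f(x)$ and hence \eqref{highderiv}, and concludes with \eqref{mainineq}, the Leibniz rule, and the uniform upper bound on the admissible scales. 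So apart from packaging (explicit one-dimensional reduction versus the foliation formalism) the two arguments coincide.

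The one substantive point where you differ is the cap on $R(t)$ enforcing $B_{R(t)+1}(t)\subset(0,T_v)$. Where that cap binds, your claim $\epsilon|d_t(\lambda g)|_{R(t)}\asymp|\lambda g(t)|$ survives only as $\lesssim$, so \eqref{mainineq} delivers a denominator weaker than $(1+|\lambda g(t)|)^{m-1}$; and the cap can bind on part of $\supp\psi$ (e.g.\ $f(x)=|x|^2$ with $\supp\psi$ reaching close to $\partial\Omega$), in which case the constant you produce degrades like a power of $|x|/\mathrm{dist}(x,\partial\Omega)$ and is no longer independent of $\psi$ and $\Omega$, as the corollary asserts. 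The paper sidesteps this by running the construction on all of $\R^d$ without a containment cap, where convexity alone gives the $\psi$-independent scale bound $3^{R(x)}\le\frac{1}{4C}$ (at the cost of leaving implicit what happens when a ball leaves $\Omega$); if you keep your cap you should either weaken the claimed uniformity or explain how to recover the full denominator near $\partial\Omega$. Two smaller points: the density in your verification of axiom (vi) is bounded below only if the scales are also capped so that, say, $c_02^j\le\frac12$ (the paper's analogous restriction is $j\le C$), and $|d^k_{t_0}h|_j$ is a supremum over orders $1\le k'\le k$, so its identification with Euler derivatives should be stated as a two-sided comparison of those suprema (valid because the scales are bounded above).
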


The proof of the corollary is contained in section \ref{bnwsec} along with two different propositions which establish stable estimates for the constant of uniform radial tameness (the first proposition dealing with polynomials as promised, and the second establishing finiteness of the constant for convex functions of finite type in the sense of \cite{bnw1988}).  The final result in that section demonstrates how estimates of the sort established by Bruna, Nagel, and Wainger in terms of volumes of caps can be deduced in the standard way from \eqref{sublev}.

The structure of the rest of this paper is as follows.  Section \ref{initial} establishes a few immediate observations and is then devoted to a brief study of functions of finite type on Euclidean balls.  Though relatively elementary, this section includes a concrete construction on the Euclidean ball of analogues of the classical Littlewood-Paley projections (which eliminates problems near the boundary that occur in more straightforward approaches) which may be of independent interest.  Theorem \ref{finitetype} falling in this section can be thought of as a sharp characterization of functions of finite type on the ball.  By sharp we mean that its main hypothesis \eqref{sharpcond} is implied (with a slightly worse constant) by and substantially weaker than the main conclusion \eqref{bigfinish}.  Section \ref{theproof} contains the main covering lemma and the proof of theorem \ref{maintheorem}.  Finally, section \ref{therest} is devoted to three topics.  The first regards the choice of scale function $R$ in theorem \ref{maintheorem}.  We show that, in many contexts, there is a natural choice for $R$ and that, roughly speaking, it can be thought of as simply the largest scale on which the magnitude of the gradient looks roughly constant.  The second topic in section \ref{therest} is the proof of corollary \ref{bnwcor} and an analysis of uniform radial tameness.  Finally we remind the readers of the relevant definitions, theorems, and inequalities from Street \cite{street2011} and present the final theorem, theorem \ref{ccworks}, which illustrates how the structures behind theorem \ref{maintheorem} arise naturally in the context of Carnot-Carath\'{e}odory geometry.

\section{Initial steps} \label{initial}
\subsection{Basic observations}

When $B_{j'}(x') \subset B_j(x)$, the mapping $\Phi_{j,x}^{-1} \circ \Phi_{j',x'}$ is a smooth map from the ball $\B$ into itself.
The smooth engulfing property implies (via the Leibniz and chain rules) that
\begin{equation} || f \circ \Phi_{j,x}^{-1} \circ \Phi_{j',x'} ||_{\FS} \lesssim (1+ C_{|j-j'|})^m ||f||_{\FS} \label{smengplus}
\end{equation}
with an implied constant depending only on dimension and $m$ (where $C_{|j-j'|}$ is the same constant appearing in \eqref{smeng}).
More generally, suppose $f$ is a smooth function on $\Omega$.  Fix a point $x_0 \in \Omega_0$.  For any ball $B_j(x)$ containing $x_0$ with $x$ also in $\Omega_0$, we may measure the smoothness of $f$ at $x_0$ by means of the formula
\[ | d^k_{x_0} f|_{B_j(x)} :=  \sup_{1 \leq |\alpha| \leq k} \left| \left. \partial_t^{\alpha} \left[ f \circ \Phi_{j,x} (t)\right] \right|_{t = \Phi_{j,x}^{-1}(x_0)} \right|, \]
i.e., by taking the usual mixed partials on the Euclidean ball $\B$ and evaluating at the appropriate point (note that the definition \eqref{pointsmdef} corresponds to the case $x=x_0$, i.e., $|d_x^k f|_j = |d^k_x f|_{B_j(x)}$).  Although the magnitude of $d f_{x_0}$ depends on the choice of the ball $B_j(x)$, smooth engulfing will dictate the comparability condition
\begin{equation}
(1 + C_{|j-j'|})^{-m} |d^k_{x_0} f|_{B_{j'}(x')}  \lesssim |d^k_{x_0} f|_{B_{j}(x)} \lesssim 
(1 + C_{|j-j'|})^{m} |d^k_{x_0} f|_{B_{j'}(x')} \label{comparable}
\end{equation}
whenever $|d^k_{x_0} f|_{B_{j}(x)}$ and $|d^k_{x_0} f|_{B_{j'}(x')}$ are well-defined (with implicit constant depending only on the usual suspects of dimension and $m$).

One final elementary observation is that when $x \in \Omega_0$ and $B_j(x) \neq \emptyset$, there is a smooth, nonnegative function $\eta_{j,x}$ supported on $B_j(x)$ (meaning it is identically zero outside this ball) and bounded above by one which is identically one on $B_{j-1}(x)$ and has
\begin{equation} ||\eta_{j,x} \circ \Phi_{j',x'}||_{\FS} \lesssim (1+C_{|j-j'|})^m \label{smoothdouble2} \end{equation}
for any ball $B_{j'}(x')$.  The function $\eta_{j,x}$ is {\it not} necessarily smooth on $\Omega$, but is nevertheless Borel measurable.  These assertions follow immediately from smooth doubling: choose any $C^\infty$ function $\eta$ on the ball $\B$ which is identically zero outside the ball of radius $(1+c)/2$ centered at the origin, identically one on the ball of radius $c$ centered at the origin, and maps into $[0,1]$.  We define $\eta_{j,x}(x_0) := \eta \circ \Phi_{j,x}^{-1}(x_0)$ when $x_0 \in B_j(x)$ and $\eta_{j,x}(x_0) = 0$ when $x' \not \in B_j(x)$.  By construction we have uniform control on $|d^k_{x_0} \eta_{j,x}|_{B_{j'}(x')}$ (as it depends only on $\eta$) when $x_0 \in B_{j}(x)$.  In all other cases, $\eta_{j,x}(x_0)$ and all its derivatives will vanish identically.  The function $\eta_{j,x}$ is Borel measurable because it is continuous on the Borel set $B_{j}(x)$.

\subsection{An aside on functions of finite type}
\label{ftsec}
An important notion frequently tied to the estimation of oscillatory integrals is that of functions of finite type.  While there are many variations and generalizations of this notion appearing in the literature, at its core, a function of finite type is one which is nearly polynomial.  The principal benefit of restricting attention to these functions is that they satisfy an inequality of the form
\[ \sup_{|\omega| < 1} |\partial^{\alpha} f(\omega)| \leq C_{\alpha,f} \sup_{|\omega| < 1} |f(\omega)| \]
for some constant $C_{\alpha,f}$ which depends on $f$ only in a relatively tame way (i.e., in terms of its $C^m$ norm and lower bounds for the nonvanishing derivative, etc.).  This inequality may be thought of as bounding the high frequency components of $f$ by the low frequencies (and so we have the heuristic that functions which are locally like polynomials are also locally like slowly-varying complex exponential functions).  A closely related problem is to determine the maximal $\epsilon$ such that 
\[ \sup_{|\omega| < 1} |\partial^{\alpha} f(\omega)| \leq \epsilon^{-|\alpha|} \sup_{|\omega| < 1} |f(\omega)| \]
holds for a given $f$ and a certain range of derivatives $\alpha$.  In this subsection we record a theorem which gives a sharp answer to this question of determining the optimal $\epsilon$ up to a universal constant depending only on dimension and the degree of smoothness of $f$.  It gives a quantitative analogue of the finite-type condition which is useful in the proof of theorem \ref{maintheorem} and is hopefully interesting in its own right.  The proof can be accomplished by adaptation of the usual real variable Littlewood-Paley methods to the setting of the unit ball.
\begin{theorem}
Suppose $f \in C^m(\B)$ satisfies the inequality
\begin{equation}
\mathop{\sup_{|\omega| < 1}}_{|\alpha| = m} \epsilon^{|\alpha|} |\partial^{\alpha} f(\omega)| \leq \mathop{\sup_{|\omega| < 1}}_{\ell \leq |\alpha| < m} \epsilon^{|\alpha|} |\partial^{\alpha} f(\omega)| \label{sharpcond}
\end{equation}
for some positive $\epsilon \leq 1$ and some integer $\ell < m$.
Then there is an implicit constant depending only on dimension and $m$ such that
\begin{equation} \sup_{|\omega| < 1} |\partial^{\alpha} f(\omega)| \lesssim \epsilon^{-|\alpha|} \mathop{\sup_{|\omega| < 1}}_{|\beta| = \ell} \epsilon^{|\beta|} |\partial^{\beta} f(\omega)| \label{bigfinish}
\end{equation}
for all multiindices $\ell \leq |\alpha| \leq m$. \label{finitetype}
\end{theorem}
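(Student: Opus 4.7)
The plan is to reduce the theorem to a Gagliardo--Nirenberg-type interpolation inequality on the ball, establish that inequality using the Littlewood--Paley projections the author has alluded to, and then exploit the hypothesis \eqref{sharpcond} through a Young-type absorption.

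Let $M_k := \sup_{|\omega|<1,\ |\alpha|=k}|\partial^\alpha f(\omega)|$, so that \eqref{sharpcond} reads $\epsilon^m M_m \leq A$ with $A := \max_{\ell \leq k \leq m-1} \epsilon^k M_k$, and \eqref{bigfinish} is the claim that $\epsilon^k M_k \lesssim \epsilon^\ell M_\ell$ for $\ell \leq k \leq m$. The key analytic input I would prove is the additive interpolation
\[ M_k \lesssim M_\ell + M_\ell^{(m-k)/(m-\ell)}\, M_m^{(k-\ell)/(m-\ell)} \qquad (\ell \leq k \leq m). \]
The additive $M_\ell$ is genuinely necessary on a bounded domain: for a polynomial of degree strictly between $\ell$ and $m$, $M_m$ vanishes while $M_k$ does not, so the pure multiplicative Landau--Kolmogorov form available on $\R^d$ cannot survive the passage to $\B$.

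To establish the interpolation I would use the Littlewood--Paley projections $P_j$ promised earlier in the section to decompose $f = P_{\leq 0} f + \sum_{j \geq 1} P_j f$. The construction should be arranged so that: (i) $P_{\leq 0}$ projects onto a finite-dimensional space of low-frequency functions (essentially polynomials of degree less than $m$), on which $M_k$ is controlled by $M_\ell$ via equivalence of norms modulo polynomials of degree less than $\ell$, giving the additive $M_\ell$; (ii) each high-frequency piece $P_j f$ satisfies Bernstein $\|\partial^\alpha P_j f\|_\infty \lesssim 2^{j|\alpha|}\|P_j f\|_\infty$ together with the dual estimate $\|P_j f\|_\infty \lesssim 2^{-jk} M_k$ for every $\ell \leq k \leq m$. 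Bernstein and its dual yield
\[ \|\partial^k P_j f\|_\infty \lesssim \min\bigl(2^{j(k-\ell)} M_\ell,\ 2^{j(k-m)} M_m\bigr), \]
and summing in $j$ about the transition scale $2^{j_0} \sim (M_m/M_\ell)^{1/(m-\ell)}$ produces the multiplicative term of the interpolation.

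With the interpolation in hand, multiplying by $\epsilon^k$ and substituting $M_m \leq \epsilon^{-m} A$ gives
\[ \epsilon^k M_k \lesssim \epsilon^\ell M_\ell + (\epsilon^\ell M_\ell)^{(m-k)/(m-\ell)} A^{(k-\ell)/(m-\ell)}, \]
since $\epsilon \leq 1$ and $k \geq \ell$. Taking the maximum over $\ell \leq k \leq m-1$, attained at some $k^*$ with exponent $\theta^* := (m-k^*)/(m-\ell) \geq 1/(m-\ell) > 0$, the resulting inequality $A \lesssim \epsilon^\ell M_\ell + (\epsilon^\ell M_\ell)^{\theta^*} A^{1-\theta^*}$ forces $A \lesssim \epsilon^\ell M_\ell$ (either the first term dominates on the right, or else cancelling $A^{1-\theta^*}$ on both sides gives the same conclusion up to a constant raised to the power $1/\theta^* \leq m-\ell$). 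The bound $\epsilon^m M_m \leq A$ then disposes of the $k=m$ case. The main obstacle will be producing the $P_j$ on $\B$ whose dual Bernstein estimate $\|P_j f\|_\infty \lesssim 2^{-jk} M_k$ remains uniform up to the boundary: a naive Fourier-analytic construction on $\R^d$ applied after extension does not preserve the fine structure of the individual $M_k$ simultaneously, so the ``concrete construction'' that the author advertises as possibly of independent interest must be set up before the clean interpolation argument above can be run.
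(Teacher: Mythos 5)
Your overall architecture is sound, and it is organized differently from the paper's argument: you reduce the theorem to an additive Landau--Kolmogorov/Gagliardo--Nirenberg interpolation $M_k \lesssim M_\ell + M_\ell^{(m-k)/(m-\ell)} M_m^{(k-\ell)/(m-\ell)}$ in sup norm on $\B$ and then close the loop with a scalar Young-type absorption; that scalar step is correct (the splitting of $\epsilon^k$ as $(\epsilon^\ell)^{(m-k)/(m-\ell)}(\epsilon^m)^{(k-\ell)/(m-\ell)}$, the bound $\theta^* \geq 1/(m-\ell)$, and the treatment of $|\alpha|=m$ via \eqref{sharpcond} all check out, granting finiteness of the sup norms, which the paper also implicitly assumes). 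The paper never isolates a multiplicative interpolation inequality: it builds two-variable operators $P_j f(x,h)$ (convolution with a moment-vanishing kernel at scale $2^{-j}$, evaluated only at $|x| \leq 1-2^{-j}$, with a degree-$m$ Taylor polynomial in $h$ used to reach points near the boundary), proves the approximation bound \eqref{lpest1}, telescopes over scales to get \eqref{lpest3}, and then absorbs directly by choosing the cutoff scale $2^N = (\epsilon\delta)^{-1}$ with $\delta$ small, using \eqref{sharpcond} only once at that threshold. So your route trades the paper's single-scale absorption for a clean interpolation lemma plus arithmetic.

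The genuine gap is that this interpolation lemma -- the only nontrivial analytic input in your plan -- is asserted rather than proved, and the proof you sketch for it stalls exactly where you admit it does: producing projections on $\B$ whose ``dual Bernstein'' bounds (for the band pieces, not $P_jf$ itself as written) are uniform up to the boundary. That boundary problem is precisely what the paper's auxiliary variable $h$ is introduced to solve, so as it stands your proposal defers the hard step rather than completing it. The route is salvageable without inventing a boundary-uniform Littlewood--Paley theory, because the inequality you need is classical: apply the one-dimensional additive Landau--Kolmogorov inequality to $\partial^\beta f$, $|\beta|=\ell$, along chords of length bounded below (directions in a fixed-aperture cone about the inward direction exist at every point of $\B$, and polarization recovers all mixed partials of a given order from directional ones), or alternatively use a Stein extension together with the $\R^d$ inequality and a standard absorption of the intermediate norms. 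Either supply such a proof or cite the inequality explicitly; without that, the argument is incomplete at its load-bearing step.
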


It should be remarked that \eqref{bigfinish} clearly implies \eqref{sharpcond} for some $\epsilon'$ differing from $\epsilon$ by a uniform constant.  As such we essentially have a characterization of \eqref{bigfinish}.  The theorem, however, is far from tautological, since \eqref{sharpcond} is easily verified while \eqref{bigfinish} is not.  In particular \eqref{sharpcond} is immediately true with $\epsilon = 1$ when $f$ is a polynomial of degree at most $m-1$.  Likewise it is easy to see that an acceptable $\epsilon$ satisfying \eqref{sharpcond} may be given proportional to ratio $||f||_{C^{m-1}(\B)} / ||f||_{C^m(\B)}$ (which is how the theorem is typically applied to functions of finite type).  A third example, relevant to the Carnot-Carath\'{e}odory geometry, is given after the proof.

\begin{proof}[Proof of theorem \ref{finitetype}]
Let $\varphi$ be a smooth function compactly supported in $\B$.  We suppose that $\varphi$ is even and satisfies the moment conditions
\[ \int_{\B} \varphi(x) dx = 1 \mbox{ and }  \int_{\B} x^{\alpha} \varphi(x) dx = 0 \mbox{ when } 1 \leq |\alpha| \leq m-1. \]
Using this $\varphi$, we consider the following family of operators for $j$ a nonnegative integer:
\begin{align*}
 P_jf (x,h)  & := 2^{dj} \int f(x-z) \sum_{k=0}^{m} 2^{kj} \frac{((h \cdot \nabla)^k \varphi)(2^j z)}{k!}  dz.
\end{align*}
This is well-defined for bounded, continuous functions on the ball provided that $|x| \leq 1 - 2^{-j}$.  Note that the dependence on $h$ is polynomial.  In particular, with $x$ fixed, the $h$ dependence is exactly the degree $m$ Taylor polynomial of $P_j f(x+h,0)$ at $h=0$, so $P_j f(x,h) = f(x+h)$ when $f$ is any polynomial of degree at most $m$.  We may therefore reasonably think of $P_j f(x,h)$ as an analogue of the Littlewood-Paley projection of $f$ onto frequencies $2^j$ and below.   Just as with Littlewood-Paley projections, we have uniform estimates
\begin{equation} \sup_{|x| \leq 1-2^{-j}} |\partial^{\beta}_h P_j f(x,h)| \lesssim 2^{j |\beta|} (1 + (2^j |h|)^{m-|\beta|}) \sup_{|\omega| < 1} |f(\omega)| \label{lpbdd} \end{equation}
with implied constant which is independent of $f$, $h$, and $j$ (but may depend on the multiindices, $m$, and dimension).

If $f \in C^{\ell}(\B)$ for some $\ell \leq m$, we may integrate by parts to conclude
\begin{equation}
\begin{split}
 P_j f(x,h) = & 2^{dj} \int \varphi(2^j(x-z)) \sum_{k=0}^{\ell-1} \frac{((h \cdot \nabla)^k f)(z)}{k!} dz \\
 & + 2^{dj} \int ((h \cdot \nabla)^{\ell} f)(x-z) \sum_{k = 0}^{m - \ell} 2^{kj} \frac{((h \cdot \nabla)^k \varphi)(2^j z)}{(k + \ell)!}  dz.
\end{split} \label{lpdif}
\end{equation}
In particular, if $|\beta| \geq \ell$, then the partial derivative $\partial^{\beta}_h$ kills the first term on the right-hand side of \eqref{lpdif}.  Thus we have a slight improvement of \eqref{lpbdd}:
\begin{equation} \mathop{\sup_{|x| \leq 1-2^{-j}}}_{|h| \leq 2^{-j}} |\partial^{\beta}_h P_j f(x,h)| \lesssim 2^{j (|\beta|-\ell)} \mathop{\sup_{|\omega| < 1}}_{|\gamma| = \ell} |\partial^{\gamma} f(\omega)|. \label{lpbdd2} \end{equation}
Returning to \eqref{lpdif}, changing variables in the first integral on the right-hand side gives
\[ \int \varphi(2^j(x-z)) \sum_{k=0}^{\ell-1} \frac{((h \cdot \nabla)^k f)(z)}{k!} dz =  \int \varphi(2^j z) \sum_{k=0}^{\ell-1} \frac{((h \cdot \nabla)^k f)(x-z)}{k!} dz. \]
By Taylor's Theorem, we have the inequalities
\begin{align*}  \left| \sum_{k=0}^{\ell-1} \frac{((h \cdot \nabla)^k f)(x-z)}{k!} - f(x-z+h) \right| & \leq \sup_{|\omega| < 1} \frac{|(h \cdot \nabla)^{\ell} f(\omega)|}{\ell!}, \\
 \left| \sum_{k=0}^{\ell-1} \frac{((-z \cdot \nabla)^k f)(x+h)}{k!} - f(x-z+h) \right| & \leq \sup_{|\omega| < 1} \frac{|(z \cdot \nabla)^{\ell} f(\omega)|}{\ell!}, 
\end{align*}
provided that $x-z$ and $x-z+h$ belong to the unit ball in the first case and $x+h$, $x-z+h$ belong to the ball in the second case.  In particular, the second inequality contains a polynomial in $z$ of degree less than $m$; if multiplied by $\varphi(2^{j}z)$ and integrated in $z$, all terms but the constant term will cancel.  We conclude from these estimates and \eqref{lpdif} that the quantity $|P_j f(x,h) - f(x+h)|$ is dominated by

\begin{align*}
& 2^{dj} \int |\varphi(2^j z)| \left[ \sup_{|\omega| < 1} \frac{|(h \cdot \nabla)^{\ell} f(\omega)|}{\ell!} + \sup_{|\omega| < 1} \frac{|(z \cdot \nabla)^{\ell} f(\omega)|}{\ell!} \right] dz \\
 & + 2^{dj} \int \left[ \sup_{|\omega| < 1} \frac{|(h \cdot \nabla)^{\ell} f(\omega)|}{\ell!} \right] \sum_{k=0}^{m-\ell} 2^{kj} \left| \frac{ (( h \cdot \nabla)^k \varphi)(2^j z)}{(k +  \ell )!} \right| dz.
 \end{align*}
%By standard techniques this sum may itself be dominated by
%\begin{align*}
% \frac{1}{(\ell+1)!} & \left( \sum_{|\beta| = \ell+1} \sup_{|\omega| < 1} |\partial^\beta f(\omega)|^2 \right)^{\frac{1}{2}} \left[ ||\varphi||_1 (|y|^{\ell+1} + 2^{-j (\ell+1)}) \vphantom{         \left( \sum_{|\beta| = k} ||\partial^\beta \varphi||_1^2 \right)^{\frac{1}{2}}  } \right. \\
% &  \ \left. + |y|^{\ell+1}  \sum_{k=0}^{m - \ell - 1} 2^{kj} |y|^k \frac{(\ell+1)!}{(k + \ell + 1)!} \left( \sum_{|\beta| = k} ||\partial^\beta \varphi||_1^2 \right)^{\frac{1}{2}} \right].
%\end{align*}
%Suppose from now on that $|h| \leq 2^{-j} K$ for some fixed constant $K$.   
From here it is easy to see that there must be an implicit constant depending only on $\varphi$, dimension, and $m$ so that $|x| \leq 1 - 2^{-j}$ and $|x+h| < 1$ imply
\begin{equation} |P_j f(x,h) - f(x+h) | \lesssim 2^{-j\ell} (1 + (2^j |h|)^m) \mathop{\sup_{|\omega| < 1}}_{|\beta| = \ell} |\partial^{\beta} f(\omega)|. \label{lpest1}
\end{equation}
It should also be noted that the inequality above will also hold trivially when $\ell = 0$ by virtue of \eqref{lpbdd}.

For fixed $f$, let $g_j^{x} (y) := P_j f(x,y-x)$.
Because the dependence of $g_j^x(y)$ on $y$ is that of a polynomial of degree at most $m$, we have that
\[ P_{j-1} g_j^{x} (x+\delta,h-\delta) = g^{x}(x+ h) = P_j f(x,h). \]
Comparison to $P_{j-1} f(x+\delta, h+\delta)$ yields
\[ P_j f(x,h) = P_{j-1} f (x+\delta,h-\delta) + P_{j-1} (g^x_j - f) (x+\delta,h-\delta) \]
provided $|x+\delta| \leq 1 - 2^{-j+1}$ and $|x| \leq 1 - 2^{-j}$.  Now every point in the ball of radius $1-2^{-j}$ is within distance $2^{-j}$ of  a point in the ball of radius $1 - 2^{-j+1}$.  Let $E_j$ be the set of pairs $(x,h)$ where $|x| \leq 1 - 2^{-j}$, $|x+h| < 1$, and $|h| < 2^{-j+1}$; we have
\begin{align*}
\mathop{\sup_{|x| \leq 1 - 2^{-j}}}_{|h| \leq 2^{-j}}&  |\partial^{\alpha}_h P_j f(x,h)| - \mathop{\sup_{|x| \leq 1 - 2^{-j+1}}}_{|h| \leq 2^{-j+1}}  |\partial^{\alpha}_h P_{j-1} f(x,h)| \\ &  \lesssim  2^{j |\alpha|} \sup_{(x,h) \in E_j} |P_j f(x,h) - f(x+h)| \end{align*}
where the implicit constant comes from \eqref{lpbdd}.  In light of \eqref{lpbdd2}, summing $j$ from $N+1$ to infinity and using the fact that $\partial^{\alpha}_h P_j f(x,h)$ at $h=0$ tends to $\partial^{\alpha} f(x)$ as $j \rightarrow \infty$ when $f \in C^{m}(\B)$ and $|\alpha| \leq m$ (shown by integration by parts as usual) gives
\begin{equation} \begin{split} \sup_{|\omega| < 1} |\partial^{\alpha} f(\omega)| \lesssim  & 2^{(|\alpha| - \ell)N} \mathop{\sup_{|\omega| < 1}}_{|\beta| = \ell} |\partial^{\beta} f(\omega)| \\ & + \sum_{j=N+1}^\infty 2^{|\alpha| j} \sup_{(x,y) \in E_j}  |P_j f(x,y) - f(x+y)|. \end{split} \label{lpest3}
\end{equation}

Now suppose that $f$ satisfies the inequality \eqref{sharpcond}
for some $\epsilon \leq 1$.  Choose $N$ so that $2^N = \epsilon^{-1} \delta^{-1}$ for some $\delta < 1$ to be chosen momentarily.  Applying the inequalities \eqref{lpest1} (with $\ell$ replaced by $m$) and \eqref{sharpcond} to the sum on the right-hand side of \eqref{lpest3} gives
\begin{align*}
 \sup_{|\omega| < 1} |\partial^{\alpha} f(\omega)| \lesssim & \  (\epsilon \delta)^{\ell -|\alpha|} \mathop{\sup_{|\omega| < 1}}_{|\beta| = \ell} |\partial^{\beta} f(\omega)| + (\epsilon \delta)^{m - |\alpha|} \mathop{\sup_{|\omega| < 1}}_{|\beta| = m} |\partial^{\beta} f(\omega)| \\
\lesssim & \ \epsilon^{- |\alpha|}  \left[ \delta^{\ell -|\alpha|} \mathop{\sup_{|\omega| < 1}}_{|\beta| = \ell} \epsilon^{|\beta|} |\partial^{\beta} f(\omega)| + \delta^{m - |\alpha|} \mathop{\sup_{|\omega| < 1}}_{|\beta| < m} \epsilon^{|\beta|} |\partial^{\alpha} f(\omega)| \right]
\end{align*}
(for $|\alpha| = \ell$ this inequality is trivially true).  If we choose $\delta$ small enough that $\delta$ times the implicit constant is between $\frac{1}{4}$ and $\frac{1}{2}$ (this will always be possible since we may increase the magnitude of the implicit constant as necessary), we may take a supremum over all such $\alpha$ with $\ell \leq |\alpha| < m$ and conclude
\[ \mathop{\sup_{|\omega| < 1}}_{\ell \leq |\alpha| < m} \epsilon^{|\alpha|} |\partial^{\alpha} f(\omega)| \lesssim \mathop{\sup_{|\omega| < 1}}_{|\beta| = \ell} \epsilon^{|\beta|} |\partial^{\beta} f(\omega)|. \]
Combining this inequality with \eqref{sharpcond} itself handles the case of multiindices $\alpha$ with $|\alpha| = m$.
\end{proof}

We now return to the issue of establishing the hypothesis \eqref{sharpcond} in a manner relevant to the Carnot-Carath\'{e}odory geometry.   Suppose $Y_1,\ldots,Y_d$ are $C^m$ vector fields on $\B^d$.  We will say that a function $f$ is of polynomial type with respect to $Y_1,\ldots,Y_d$ if 
\[ Y_{i_1} \cdots Y_{i_M} f \equiv 0 \]
for some fixed $M$ and all choices $(i_1,\ldots,i_M) \in \{1,\ldots,d\}^M$.  Any such function will automatically satisfy \eqref{sharpcond} for some $\epsilon$ which depends only on the $C^m$ norms of the vector fields $Y_i$, the infimum of the absolute value of the determinant $\det(Y_1,\ldots,Y_d)$, and the constant $M$.  This is because we may write
\[ \frac{\partial}{\partial x_j} = \sum_{i=1}^d c_{ji}(x) Y_i \]
for some functions $c_{ij} \in C^m(\B)$ whose norms depend only on the norms of the $Y_i$'s and the lower bound from the determinant; this fact is easily seen from Cramer's rule.  In particular, we see that the mixed partial $\partial^{\beta} f$ may be written as a $C^m$ linear combination of derivatives $Y_{i_1} \cdots Y_{i_k}$ for $k \leq M$.  Since these expressions vanish identically when $k = M$; we see that the order $M$ partial derivatives of $f$ are a smooth linear combination of all lower order partial derivatives, and the coefficients do not depend on $f$.

\section{Proof of the main theorem} \label{theproof}

\subsection{Covering lemma}

The benefit of the smoothness assumptions is that it allows the passage from a Vitali- or Besicovitch-type covering lemma to a smooth partition of unity whose derivatives are well-controlled.

\begin{lemma}
Fix some positive integer $N$ and some open subset $E \subset \Omega_0$. \label{partlemma}
 Suppose $R : E \rightarrow \Z$ is a bounded function satisfies the properties that $B_{R(x)+1}(x)$ is well-defined for each $x \in E$ and that $$B_{R(x)} (x) \cap B_{R(x')}(x') \neq \emptyset \Rightarrow |R(x) - R(x')| < N.$$
 Then there is a special collection $G$ of points $x \in E$ and nonnegative functions $\eta_x$ for each $x \in G$ which satisfy a number of properties.  First, $\eta_x$  identically zero outside $B_{R(x)}(x)$.  Next, at every $y \in E$, there is a ball $B_j(y)$ for which there are at most a uniformly bounded number of points $x \in G$ at which $B_{R(x)}(x) \cap B_j(y) \neq \emptyset$.  The functions $\eta_x$ are uniformly smooth in the sense that $x \in G$, $||\eta_x \circ \Phi_{R(x),x}||_{\FS} \lesssim 1$.  Finally, $\sum_{x \in G} \eta_x = 1$ on $E$.  \end{lemma}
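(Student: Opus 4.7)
Since $R$ is bounded, it takes values in a finite range of integers $[r_{\min}, r_{\max}]$. I would construct $G$ by a standard Vitali-type greedy procedure, processing scales in decreasing order: at each $r = r_{\max}, r_{\max}-1, \ldots$, let $G_r$ be a maximal (by Zorn's lemma) subcollection of $\{x \in E : R(x) = r\}$ which is not already covered by $\bigcup_{x' \in G, R(x') > r} B_{R(x') - 1}(x')$ and whose ``cores'' $\{B_{r - 2}(x) : x \in G_r\}$ are pairwise disjoint. Setting $G := \bigcup_r G_r$, the collection $\{B_{R(x) - 1}(x) : x \in G\}$ covers $E$: any $y \in E$ which fails to be added to $G_{R(y)}$ must satisfy $B_{R(y) - 2}(y) \cap B_{R(y) - 2}(x) \neq \emptyset$ for some $x \in G_{R(y)}$, and axiom (ii) (Engulfing) applied with $j = R(y) - 1$, $j' = R(y) - 2$ then gives $y \in B_{R(y) - 2}(y) \subset B_{R(y) - 1}(x) = B_{R(x) - 1}(x)$.

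For the local finiteness claim, fix $y \in E$ and take $j := R(y)$ (which exists by iterated use of axiom (i) starting from the hypothesized $B_{R(y)+1}(y)$). Any $x \in G$ for which $B_{R(x)}(x) \cap B_{R(y)}(y) \neq \emptyset$ satisfies $|R(x) - R(y)| < N$ by hypothesis, so $R(x)$ takes one of at most $2N - 1$ values. For each fixed scale $r$ in this range, an iteration of axiom (iii) (Weak Doubling) applied to the pairwise disjoint cores $B_{r - 2}(x)$ — together with Engulfing to locate the cores inside a common larger ball centered near $y$ — bounds the number of such $x$ by a constant depending only on $N$ and the weak doubling constant. Summing over the $O(N)$ admissible scales yields the uniform overlap bound.

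The desired functions $\eta_x$ are then obtained by normalizing the cutoffs $\tilde\eta_x := \eta_{R(x), x}$ from the preliminary subsection; recall $\tilde\eta_x$ is supported in $B_{R(x)}(x)$, identically $1$ on $B_{R(x) - 1}(x)$, and uniformly smooth in the sense $\|\tilde\eta_x \circ \Phi_{R(x'), x'}\|_{\FS} \lesssim (1 + C_{|R(x) - R(x')|})^m$ by \eqref{smoothdouble2}. Since $\{B_{R(x) - 1}(x)\}_{x \in G}$ covers $E$, the sum $S := \sum_{x \in G} \tilde\eta_x$ satisfies $S \geq 1$ on $E$, so $\eta_x := \tilde\eta_x / S$ gives the claimed partition of unity on $E$. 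The main obstacle lies in verifying $\|\eta_x \circ \Phi_{R(x), x}\|_{\FS} \lesssim 1$: the upper derivative bound on $S$ follows from the local finiteness (only uniformly boundedly many terms contribute on $B_{R(x)}(x)$, and each is controlled by \eqref{smoothdouble2} with the constant bounded since all contributing scales satisfy $|R(x) - R(x')| < N$), but the lower bound $S \geq 1$ is only guaranteed on $E$. One handles the potential extension of $B_{R(x)}(x)$ beyond $E$ by appealing instead to the pointwise inequality $S \geq \tilde\eta_x$, which bounds $\tilde\eta_x/S$ by $1$ wherever the numerator is nonzero, after which the quotient rule applied to the finite sum yields the $C^m$ bound.
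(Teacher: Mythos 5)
Your construction of $G$ (scale-by-scale maximal families with pairwise disjoint cores $B_{r-2}(x)$), the covering argument via maximality plus engulfing, and the local-finiteness argument via weak doubling are in substance the same as the paper's. The genuine gap is in the final step, where you set $\eta_x := \tilde\eta_x/S$ with $S=\sum_{x'\in G}\tilde\eta_{x'}$. The pointwise inequality $S\geq\tilde\eta_x$ controls only the \emph{size} of the quotient, not its derivatives: the quotient rule produces terms carrying $S^{-2},\dots,S^{-m-1}$, and the only region where you have a lower bound $S\geq 1$ is $E$ itself, while the support $B_{R(x)}(x)$ of the numerator need not be contained in $E$ (nothing in the lemma forces this; it is only in the application, via \eqref{firstderiv}, that the balls stay inside $E$). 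On $B_{R(x)}(x)\setminus E$ two or more cutoffs can overlap only in their outer annuli, where each is small --- engulfing forces containment only when a ball meets the \emph{inner} ball $B_{j-1}(x)$, so this configuration is not excluded --- and there $\partial(\tilde\eta_x/S)$ behaves like $(\partial\tilde\eta_x\,\tilde\eta_{x'}-\tilde\eta_x\,\partial\tilde\eta_{x'})/S^{2}$, which is unbounded as both cutoffs vanish; already for two translated standard bumps on $\R$ meeting only near their edges the first derivative of the normalized function blows up. Hence the claimed bound $\|\eta_x\circ\Phi_{R(x),x}\|_{\FS}\lesssim 1$ does not follow from "$\tilde\eta_x/S\leq 1$ plus the quotient rule." A secondary issue: for $S$ to be a well-defined, uniformly smooth, locally finite sum on the supports you need the overlap bound at points of $B_{R(x)}(x)\setminus E$ as well, and your weak-doubling argument (like the statement of the lemma) only provides it at points of $E$.

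This is exactly why the paper avoids division. It writes $1=\prod_{x\in G}[(1-\eta_{R(x),x})+\eta_{R(x),x}]$, expands by the distributive law (locally a finite product by the overlap bound), groups the finite subsets of $G$ according to their maximal-scale part $M(S)$, and defines $\eta_x$ as a sum over boundedly many equivalence classes of products of boundedly many cutoffs $\eta_{R(z),z}$ and $(1-\eta_{R(y),y})$. Each $\eta_x$ is then a bounded sum of products of boundedly many uniformly $C^m$ functions, so $\|\eta_x\circ\Phi_{R(x),x}\|_{\FS}\lesssim 1$ follows from \eqref{smoothdouble2} and the Leibniz rule with no lower bound on anything, while $\sum_x\eta_x=1$ on $E$ because the leftover term $\prod_{x}(1-\eta_{R(x),x})$ vanishes there (the inner balls $B_{R(x)-1}(x)$ cover $E$). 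To salvage a normalization-style argument you would need either to shrink the supports so they stay where $S\gtrsim 1$, or to prove a positive lower bound for $S$ on all of $\bigcup_{x\in G}B_{R(x)}(x)$; neither is available from the hypotheses as stated.
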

\begin{proof}
Let $E_j \subset E$ be the set of points $x$ for which $R(x) = j$.   For each $j$,  let $G_{j} \subset E_j$ be any maximal collection of points $x$ such that $B_{j-2}(x) \cap B_{j-2}(x') = \emptyset$ for any two $x,x' \in G_j$ (all these balls must exist by virtue of the compatibility condition).  
 First we note that the union of the balls $B_{j-1}(x)$ over all $x \in G_j$ will cover $E_j$: for any $y \in E_j$, $B_{j-2}(y) \cap B_{j-2}(x) \neq \emptyset$ for some $x \in G_j$ (if not, $y$ itself could be added to $G_j$ to contradict maximality).  The engulfing property guarantees that $B_{j-2}(y) \subset B_{j-1}(x)$.
Next observe that the family of balls $B_j(x)$ for $x \in G_j$ are locally finite in the following sense: for any $x \in E$, let $S$ be the set of centers $x' \in G_j$ such that $B_j(x) \cap B_{j}(x') \neq \emptyset$ (if $B_j(x)$ is not defined, the set $S$ will be trivial).  This set $S$ must necessarily be finite with uniformly bounded cardinality.  To see this, let $S_1$ be any maximal subset of $S$ of disjoint balls at scale $j-1$, i.e., $B_{j-1}(x') \cap B_{j-1}(x'') = \emptyset$ for any $x',x'' \in S_1$.  In general, let $S_k$ be a maximal subset of $S \setminus \ \bigcup_{l=1}^{k-1} S_l$ of disjoint balls at scale $j-1$.  The weak doubling property dictates that $S_{C+2}$ is empty for some universal $C$, since maximality dictates that $x' \in S_{C+1}$ implies that $B_{j-1}(x') \cap B_{j-1}(x_k) \neq \emptyset$ for $k=1,\ldots,C+1$ and some $x_k \in S_k$ (which cannot happen because the balls are all mutually disjoint at scale $j-2$).  By weak doubling again, the number of points $x' \in S_k$ at which $B_j(x) \cap B_j(x') \neq \emptyset$ is also at most $C$ for any fixed $k$.  Thus the total number of indices in $S$ which produce balls at scale $j$ meeting $B_j(x)$ is at most $C(C+1)$.
We may strengthen this result by taking a union over scales.  For any point $x \in E$, the condition $|R(x) - R(x')| < N$ when $B_{R(x)}(x) \cap B_{R(x')}(x') \neq \emptyset$ implies that there are boundedly many indices $j'$ for which $B_{R(x)}(x)$ intersects a ball $B_{R(x')}(x')$ with $x' \in G_{j'}$.  If $j$ is any index for which is bounded above by $j'-1$ for each index $j'$ identified above as well as bounded above by $R(x)-1$ (at least one such index, e.g., $j = R(x)-N$, is always guaranteed to exist), then then the number of points $x' \in G_{j'}$ for which $B_{j}(x) \cap B_{R(x')}(x') \neq \emptyset$ will be at most $C(C+1)$ (because we will have in particular that $B_j(x) \subset B_{R(x')}(x)$ by engulfing).  Uniform boundedness on the cardinality of the possible values of $j'$ gives a uniform bound on the number of nontrivial intersections $B_{j}(x) \cap B_{R(x')}(x') \neq \emptyset$ when $x'$ is allowed to range over all of $G := \bigcup_j G_j$.

As explained in \eqref{smoothdouble2}, there is a natural choice of a smooth function subordinate to $B_{j}(x)$ for each $x \in G_j$, namely the function denoted $\eta_{j,x}$.
Furthermore we have $||\eta_{j,x} \circ \Phi_{R(y),y}||_{\FS} \lesssim 1$ for any $y \in E$ simply because $\eta_{j,x} \circ \Phi_{R(y),y}$ will be identically zero unless $B_{R(x)}(x) \cap B_{R(y)}(y) \neq \emptyset$, in which case we already have the uniform bound $|R(x) - R(y)| < N$ on the indices $R(x)$ and $R(y)$ (which finishes the job when combined with \eqref{smoothdouble2}).
Now by the distributive law we have
\begin{align*}
1 & =  \prod_{x \in G} \left[ (1 - \eta_{R(x),x}) + \eta_{R(x),x} \right] \\ &  = \mathop{\sum_{S \subset G}}_{\#S < \infty} \left( \prod_{x \in S} \eta_{R(x),x} \right) \left( \prod_{y \in  G \setminus S} (1 - \eta_{R(y),y}) \right) 
\end{align*}
since on any ball $B_{R(z)-N}(z)$ with $z \in E$ all but boundedly many choices of $x$ will have $(1 - \eta_{R(x),x})$ which is identically one on this ball and $\eta_{R(x),x}$ identically zero.

Given a finite subset $S \subset G$, let $M(S)$ be the subset of $S$ drawn from $G_j$'s with maximal indices:  specifically, for each $x \in S$, $x$ belongs to $G_j$ for a unique index $j$.  We will take $x \in M(S)$ if and only if $S \cap G_k = \emptyset$ for all $k > j$.  We will say that two finite subsets $S,S'$ are equivalent when $M(S) = M(S')$.  On any equivalence class $\mathcal S$, $M(\mathcal S)$ is well-defined (since $M(S)$ is constant for all representatives $S$).  If we call the collection of equivalence classes $\mathcal E$, we have
\begin{align*}
1 & = \prod_{x \in G} (1 - \eta_{R(x),x}) + \sum_{{\mathcal S} \in {\mathcal E} \setminus \{\emptyset\}} \sum_{S \in {\mathcal S}} \left( \prod_{x \in S} \eta_{R(x),x} \right) \left( \prod_{y \in  G \setminus S} (1 - \eta_{R(y),y}) \right)
\end{align*}
(where we identify $\emptyset \in {\mathcal E}$ to be the equivalence class of the empty set).
Since $\eta_{j,x}$ is identically one on $B_{j-1}(x)$ and the union of the balls $B_{R(x)-1}(x)$ over $x \in G$ covers $E$, the first product on the right-hand side will be identically zero on $E$.  

For a fixed equivalence class $\mathcal S$, let $I_{\mathcal S}^0$ be the indices $j$ such that $G_j \cap M({\mathcal S}) \neq \emptyset$, and let $I_{\mathcal S}^{-}$ be the indices $j$ such that $j < k$ for some $k \in I_{\mathcal S}^0$.  By definition, the representatives $S$ of the equivalence class $S$ are precisely given by the union of $M(\mathcal S)$ with any fixed subset of $\bigcup_{j \in I_{\mathcal S}^{-}} G_j$.  In particular, if $I_{\mathcal S}^+$ is the complement of $I^0_{\mathcal S} \cup I^{-}_{\mathcal S}$, then every representative $S$ has $S \cap G_j = \emptyset$ when $j \in I^+_{\mathcal S}$.  Consequently, the distributive law guarantees that
\begin{align*}
\sum_{S \in {\mathcal S}} \left( \prod_{x \in S} \eta_{R(x),x} \right) & \left( \prod_{y \in  G \setminus S} (1 - \eta_{R(y),y}) \right) = \\ &  \left( \prod_{x \in M({\mathcal S})} \eta_{R(x),x} \right) \left( \prod_{y \in  G_{\mathcal S}^+ \setminus M({\mathcal S})} (1 - \eta_{R(y),y}) \right) 
\end{align*}
if we define $G^0_{\mathcal S} := \bigcup_{j \in I^0_{\mathcal S}} G_j$ and $G^+_{\mathcal S} := \bigcup_{j \in I^+_{\mathcal S}} G_j \cup \bigcup_{j \in I^0_{\mathcal S}} G_j$.
Moreover, $B_{R(x)}(x) \cap B_{R(y)}(y) = \emptyset$ when $|R(x) - R(y)| \geq N$, so the above formula remains true when $G_{\mathcal S}^+$ is replaced by the (substantially smaller) union of those $G_j$ for which $j \in I_{\mathcal S}^+ \cup I_{\mathcal S}^0$ {\it and} $|j - k| < N$ for {\it all} $k \in I^0_{\mathcal S}$.  Since this set has uniformly bounded cardinality, we may conclude that, for any $z \in E$, there is some ball $B_{j'}(z)$ on which the set of equivalence classes $\mathcal S$ giving rise to a nontrivial (i.e., not identically zero) product has uniformly bounded cardinality.  This coupled with smooth comparability guarantees that the composition of any such product with any $\Phi_{R(z),z}$ will have uniformly bounded norm in $\FS$ (since at each point of the ball the function is locally a product of bounded cardinality, and the definition of $R$ implies that each ball appearing in the product will have index uniformly near $R(z)$ if the product isn't simply identically zero).  Finally, if we set
\[ \eta_x := \sum_{{\mathcal S} \ : \ x \in M(\mathcal S)} \frac{1}{\#M({\mathcal S)}} \left( \prod_{z \in M({\mathcal S})} \eta_{R(z),z} \right) \left( \prod_{y \in  G_{\mathcal S}^+ \setminus M({\mathcal S})} (1 - \eta_{R(y),y}) \right) \]
(which again is well-defined, since only boundedly many choices of $\mathcal S$ for fixed $x$ will give a sum which is not identically zero on $B_{R(x)}(x)$), we have that
\[ 1 = \sum_{x \in G} \eta_x \]
on $E$ and $||\eta_x \circ \Phi_{R(x),x}||_{\FS}$ is uniformly bounded.  That $\eta_x$ is supported on $B_{R(x)}(x)$ follows because the same is true of $\eta_{R(x),x}$ itself.
\end{proof}

It is important to note that the set $G$ constructed by the lemma is not necessarily countable and so we have not technically constructed a parition of unity in the usual sense.  However, we will have that $G \cap L$ is countable for any leaf $L$.  This is because the sets $E \cap B_{R(x)}(x)$ are contained and open in $L$ for each $x$.  Moreover, no point $y \in L$ is contained in more than boundedly many of these sets $E \cap B_{R(x)}(x)$ for $x \in E$.  Since $L$ has a countable dense subset, the pigeonhole principle demands that there can be only countably many $x$ for which $E \cap B_{R(x)}(x) \cap L$ is nonempty for any particular leaf $L$ (as $x$ ranges over all of $G$).  In the event that the function $R$ satisfies the condition \eqref{firstderiv} from the statement of the main theorem, it is easy to see that
\[ \sum_{x \in G \cap L} \psi_x = \chi_{L \cap E} \]
(where $\chi$ represents the characteristic function) for the simple reason that the complement of $E$ will be the set where $d f$ vanishes, and \eqref{firstderiv} guarantees that $B_{R(x)}(x)$ does not contain any such points whenever $x \in E$.  By the factorization property of $\mu$, then, we have that
\begin{equation} \int_{\Omega} e^{if} \psi d \mu = \int_{\Omega \setminus E} e^{if} \psi d \mu + \int_{\mathcal F} \left( \sum_{x \in G \cap L} \int_{B_{R(x)}(x)} e^{if} \psi \eta_x d \mu_L \right) d \mu_{\mathcal F}(L) \label{lebesgue} \end{equation}
(the assumption that $\psi$ is bounded and supported on a set of finite measure in $\Omega_0$ guarantees that we trivially have dominated convergence on almost every leaf $L$).   

\subsection{Integral estimates and conclusion}

We have thus successfully reduced the problem to the very classical one of a scalar oscillatory integral on a Euclidean ball.  The main result we will use in this context is contained in the following lemma:
\begin{lemma}
Suppose $f \in \FS$ with nonvanishing gradient.  Then for each $k=1,\ldots,m-1$, there exist functions $F_\beta$ for all multiindices $|\beta| \leq k$ such that \label{balllemma}
\begin{equation} \int_{\B} e^{i f} \psi dx = \int_{\B} e^{if} \left[ \sum_{|\beta| \leq k} \partial^{\beta} \psi(x) F_{\beta}(x) \right] dx \label{sharpibp} \end{equation}
for any compactly supported $\psi \in \FS$.  The functions $F_{\beta}$ satisfy the inequalities
\begin{equation}
\left| F_\beta(x) \right| \lesssim \frac{(\omega(x))^{|\beta|} }{(\omega(x) |\nabla f(x)|)^{k}} \label{sharpjunk}
\end{equation}
for any nonnegative function $\omega$ satisfying 
\[ \frac{1}{\omega(x)} \geq \sup_{2 \leq |\gamma| \leq k+1} \left( \frac{|\partial^{\gamma} f(x)|}{|\nabla f(x)|} \right)^{\frac{1}{|\gamma|-1}}. \]
The implicit constant in \eqref{sharpjunk} depend only on the dimension and $m$.
\end{lemma}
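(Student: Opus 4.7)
The plan is to use the classical nonstationary-phase integration by parts machinery, adapted to track how coefficient derivatives pile up as factors of $\omega^{-1}$. Set $A := \nabla f/|\nabla f|^2$ and $L\phi := i^{-1} A \cdot \nabla \phi$, so that $L e^{if} = e^{if}$. Because $\psi$ is compactly supported, $k$-fold integration by parts introduces no boundary term and yields
$$\int_{\B} e^{if}\psi\,dx = \int_{\B} e^{if}(L^*)^k \psi\,dx$$
with $L^*\phi = -i^{-1}\nabla \cdot (A\phi)$. Fully expanding $(L^*)^k \psi$ by Leibniz identifies the coefficient of each $\partial^\beta \psi$, for $|\beta| \leq k$, as the required function $F_\beta$.

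To obtain the pointwise bound, I will analyze the combinatorial form of the expansion. Every term contributing to $F_\beta$ has, up to sign and combinatorial constant, the shape $\prod_{\ell=1}^{r} \partial^{\gamma_\ell} f \,/\, |\nabla f|^{2s}$ with each $|\gamma_\ell| \geq 1$. I will verify three conservation laws: (i) $2s - r = k$; (ii) $\sum_{\ell=1}^{r} (|\gamma_\ell|-1) = k - |\beta|$; and (iii) $\max_\ell |\gamma_\ell| \leq k - |\beta| + 1$. The base configuration has all $k$ copies of $A$ undifferentiated, giving $r = s = k$ with every $|\gamma_\ell| = 1$; from there, each of the $k - |\beta|$ ``coefficient'' derivatives either lands on a numerator factor $\partial^{\gamma_\ell} f$ (raising $|\gamma_\ell|$ by one while preserving $r$ and $s$) or on a $|\nabla f|^{-2}$, in which case the identity $\partial(|\nabla f|^{-2}) = -2|\nabla f|^{-4}\sum_j \partial_j f \,\partial \partial_j f$ adds two new numerator factors of orders $1$ and $2$ and raises $s$ by one. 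Either move preserves (i)--(iii).

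With these invariants, the hypothesis $|\partial^{\gamma} f| \leq |\nabla f|\,\omega^{-(|\gamma|-1)}$ for $2 \leq |\gamma| \leq k+1$ (trivial for $|\gamma|=1$) applies to every factor, since (iii) guarantees $\max |\gamma_\ell| \leq k+1$. Multiplying gives
$$|F_\beta(x)| \lesssim |\nabla f(x)|^{r-2s}\,\omega(x)^{-\sum_\ell(|\gamma_\ell|-1)} = |\nabla f(x)|^{-k}\,\omega(x)^{|\beta|-k} = \frac{\omega(x)^{|\beta|}}{(\omega(x)\,|\nabla f(x)|)^{k}},$$
which is \eqref{sharpjunk}. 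The number of terms in $F_\beta$ depends only on $k$ and the dimension, so the implicit constant depends only on $m$ and the dimension. The main technical step is the rigorous verification of the invariants (i)--(iii); these are the whole point, since the hypothesis defining $\omega$, with its $1/(|\gamma|-1)$-th root normalization, is calibrated precisely so that (ii) converts into exactly the claimed power of $\omega$, while (i) produces exactly $|\nabla f|^{-k}$ and (iii) ensures no derivative of $f$ outside the range controlled by $\omega$ ever appears.
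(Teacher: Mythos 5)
Your proposal is correct and follows essentially the same route as the paper: iterated nonstationary-phase integration by parts with the operator built from $\nabla f/(i|\nabla f|^2)$, a combinatorial bookkeeping of how derivatives distribute over the coefficient factors, and a factor-by-factor application of the hypothesis on $\omega$; your conservation laws (i)--(iii) are the same scaling identities the paper records, merely written with explicit powers of $|\nabla f|^{-2}$ instead of homogeneous functions $u(\nabla f)$. No gaps to report.
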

\begin{proof}
The proof is by the time-honored method of integration-by-parts.  Let us suppose that $u : \R^d \rightarrow \C$ is smooth and homogeneous of degree $a$.  We integrate by parts as follows:
\begin{align*}
 \int_{\B} e^{i f(x)} \partial^{\beta} \psi(x) & u(\nabla f(x)) \prod_{j=1}^n \partial^{\gamma_j} f(x) dx  \\ & = \int_{\B} \frac{ \nabla f(x)}{i |\nabla f(x)|^2} \cdot  \nabla ( e^{i f(x)}) \partial^{\beta} \psi(x) u(\nabla f(x)) \prod_{j=1}^n \partial^{\gamma_j} f(x) dx \\
& =  i \int_{\B} e^{if} \nabla \cdot \left( \partial^{\beta} \psi(x) \frac{u(\nabla f(x)) \nabla f(x)}{|\nabla f(x)|^2} \prod_{j=1}^n \partial^{\gamma_j} f(x) \right) dx.
\end{align*}
Now there are three cases to consider:  if the derivatives present in the divergence fall on $\partial^{\beta} \psi$, then we may write the resulting terms as
\[ i \int_{\B} e^{if} \sum_{\ell=1}^d \partial^{\beta+e_\ell} \psi(x) u_\ell(\nabla f(x)) \prod_{j=1}^n \partial^{\gamma_j} f(x) \]
where $e_\ell$ is the multiindex corresponding to differentation with respect to $x_\ell$ and $u_\ell(y) := u(y) y_\ell |y|^{-2}$, which will be smooth and homogeneous of degree $\alpha-1$.  When the divergence falls on the $\nabla f$ terms, we get
\[ i \int_{\B} e^{if} \partial^{\beta} \psi(x) \sum_{\ell = 1}^d \sum_{k=1}^d \left( \partial_k u_\ell \right)(\nabla f(x)) \partial^2_{x_\ell x_k} f(x) \prod_{j=1}^n \partial^{\gamma_j} f(x) dx. \]
When expanded, we find that each term remains of the same form with $u$ being replaced by $\partial_{k} u_\ell$ (which is smooth and homogeneous of degree $a-2$) and the cardinality of the product of higher derivatives increasing by one while the total number of derivatives present in the product increases by two.  Finally, if the divergence falls on one of the higher derivatives of $f$, the only effect is to increase the order of differentation on that term by $1$.  By induction, we conclude that
\[ \int_{\B} e^{i f} \psi dx = \sum_{|\beta| \leq k} \int e^{if} \partial^{\beta} \psi F_\beta dx \]
as desired (in the base case, $u(\nabla f) \equiv 1$ is homogeneous of degree $0$), where each $F_\beta$ is a linear combination (with universal coefficents depending only on the dimension, $k$, and $\beta$) of terms of the form
\[ u(\nabla f(x)) \prod_{j=1}^n \partial^{\gamma_j} f(x). \]
The index $n$ can be taken less than or equal to $k$ (the case equalling zero meaning no higher derivatives are present).  Here $u$ will be smooth and homogeneous of degree no greater than $-k$.  More precisely, an analysis of the three cases above yields that
\[ |\beta| + \deg u + \sum_{j=1}^n |\gamma_j| = 0, \ \deg u + n = -k, \mbox{ and } k \leq |\beta| + \sum_{j=1}^n |\gamma_j| \leq 2k. \]
It is equally elementary to see that $|\gamma_j|$ can be at most $k+1$ for any $k$.  In particular, given the definition of $\omega(x)$, we have that
\[ \prod_{j=1}^n |\partial^{\gamma_j} f(x)| \leq \prod_{j=1}^n |\nabla f(x)| (\omega(x))^{-|\gamma_j|+1}  = ( \omega(x) |\nabla f(x)| )^{n} (\omega(x))^{-\sum_{j=1}^n |\gamma_j|}. \]
If we set $s = \sum_{j=1}^d |\gamma_j|$, then we may conclude that
\begin{align*} |F_\beta(x)| & \lesssim \max_{s + |\beta| =k,\ldots,2k} |\nabla f(x)|^{-s-|\beta|} ( \omega(x) |\nabla f(x)| )^{-k + |\beta| + s} (\omega(x))^{-s} \\
& \lesssim \frac{(\omega(x))^{|\beta|} }{(\omega(x) |\nabla f(x)|)^{k}},
\end{align*}
which finishes the lemma.
\end{proof}

Now we return to the expression \eqref{lebesgue} and apply \eqref{sharpibp}.  Specifically we have
\begin{align}
 \int_{B_{R(x)}(x)} e^{if} \psi \eta_x d \mu_L  & =
\mu_L(B_{R(x)}(x)) \int_{\B} e^{ i \tilde f(t)} \tilde \psi(t) dt \label{parallel}
\end{align}
where
\begin{align*}
\tilde f(t) & := f \circ \Phi_{R(x),x}(t), \\
\tilde \psi(t) & := \psi \circ \Phi_{R(x),x}(t) \eta_x \circ \Phi_{R(x),x}(t) J_{R(x),x}(t).
\end{align*}
By design, the product $\eta_x \circ \Phi_{R(x),x}(t) J_{R(x),x} (t)$ has uniformly bounded norm in $\FS$ and so may (in essence) be neglected.

Let $C$ be a constant to be chosen momentarily.  If $|d_y f|_{R(y)} \leq  C \epsilon^{-1} $ for all points $y \in B_{R(x)}(x)$, then observe that we have the trivial identity
\[ \int_{\B} e^{i \tilde f(t)} \tilde \psi(t) dt = \int_{\B} e^{i \tilde f(t)} \tilde \psi(t) dt \]
and the trivial inequality
\begin{equation} |\tilde \psi(t)| \lesssim \frac{|\psi \circ \Phi_{R(x),x}(t)|}{(1 + \epsilon |d_y f|_{R(y)})^{m-1}} \label{bound1} \end{equation}
for any $y \in B_{R(x)}(x)$ (with implicit constant depending on $C$).

If $|d_y f|_{R(y)} \geq C \epsilon^{-1}$ at some point $y \in B_{R(x)}(x)$, then the assumption \eqref{firstderiv} implies a uniform bound from below at every point in the ball when $C$ is chosen sufficiently large relative to the implicit constant in \eqref{firstderiv}.   In this case we will apply lemma \ref{balllemma} to the oscillatory integral.   Observe that when $y = \Phi_{R(x),x}(t)$, we have
\begin{equation} \sup_{1 \leq |\gamma| \leq k} \frac{ |\partial^{\gamma} \tilde f(t)|}{|\nabla f(t)|} \approx \frac{ |d_y^k f|_{B_{R(x)}(x)}}{|d_y f|_{B_{R(x)}(x)}} \label{ratio}
\end{equation}
(with universal constants depending only on dimension).  Since $y \in B_{R(x)}(x)$ and consequently $|R(x) - R(y)| < N$, we have by \eqref{comparable} that
\[ |d_y^k f|_{B_{R(x)}(x)} \approx |d^k_y f|_{B_{R(y)}(y)} = |d^k_y f|_{R(y)} \]
uniformly for any $k=1,\ldots,m$ (with implied but uninteresting dependence on $N$ as well as the usual constants).  
By \eqref{highderiv} and theorem \ref{finitetype}, now, we can conclude that the ratio \eqref{ratio} is bounded above uniformly in $y$ by $\epsilon^{-k}$ and thus the function $\omega$ from the lemma may be taken equal to some uniform constant times $\epsilon$.  We can conclude that
\[ \int_{\B} e^{i \tilde f(t)} \tilde \psi(t) dt = \int_{\B} e^{i \tilde f(t)} \tilde \psi_m(t) dt \]
and
\begin{equation} |\tilde \psi_m(t)| \lesssim \frac{\sum_{k=0}^{m-1} \epsilon^k |d^{k}_{y} \psi|_{R(y)}}{(1 + \epsilon |d_y f|)^{m-1}} \label{bound2}
\end{equation}
uniformly when $\Phi_{R(x),x}(y) = t$ (by virtue of \eqref{sharpjunk} and the fact that $|d_y f| \approx 1 + |d_y f|$ on this particular ball).  

Next take either \eqref{bound1} or \eqref{bound2} and transform back to the measure $\mu_L$:  in the case of \eqref{bound1}, we define the function $\psi_x$ by  $\psi_x := \psi \eta_x$
and in the case of \eqref{bound2} we take
\begin{equation} \psi_x := \left( \frac{\tilde \psi_m}{J_{R(x),x}} \right) \circ   \Phi_{R(x),x}^{-1}. \label{moveback} \end{equation}
Since $J^{-1}_{R(x),x}$ is uniformly bounded above, we may conclude that (in both cases) we have an equality
\[ \int e^{i f} \psi \eta_x d \mu_L = \int e^{i f} \psi_x d \mu_L \]
where $\psi_x$ is zero outside $B_{R(x)}(x)$ and
\[ |\psi_x(y)| \lesssim \frac{\sum_{k=0}^{m-1} \epsilon^k |d^{k}_{y} \psi|_{R(y)}}{(1 + \epsilon |d_y f|)^{m-1}} \] % \[ \frac{ |\psi(y)| + |d^{m-1}_y \psi|_{R(y)}}{(1 + |d_y f|_{R(y)})^{m-1}} \] 
when $y \in B_{R(x)}(x)$ (with constant uniform in $x$ and $y$).  We now invert the interchange of summation and integration found in \eqref{lebesgue}.  Because the balls chosen in lemma \ref{partlemma} were locally finite on $E$, the uniform bound we just established for $|\psi_x(y)|$ continues to hold when summed over $x \in G \cap L$ on any choice of leaf $L$.  This establishes the main theorem and, in particular, the desired inequality \ref{mainineq}.  We also note that everything here is Borel measurable so the inversion of \eqref{lebesgue} is justified.  This concludes the proof of theorem \ref{maintheorem}.

A side remark: It is possible to to remove the assumption that the Jacobian from \eqref{jacobian} is bounded below if one is willing to pay a price in terms of the amplitude $\psi$.  Instead of \eqref{moveback}, the equalities
\begin{align*}
 \int_{\B} e^{i \tilde f(t)} \tilde \psi_m(t) dt & = \int_{\B} J_{R(x),x}(s) ds \int_{\B} e^{i \tilde f(t)} \tilde \psi_m(t) dt  \\
 & = \int_{\B} e^{i \tilde f(s)} \left[ \int_{\B} e^{i (\tilde f(t) -\tilde f(s))} \tilde \psi_m(t) dt \right] J_{R(x),x}(s) ds \end{align*}
 mean that you could alternately define 
 \[ \psi_x(\cdot) := \int_{\B} e^{i (\tilde f(t) -\tilde f \circ \Phi_{R(x),x}^{-1}(\cdot))} \tilde \psi_m(t) dt.\]
You would no longer need the Jacobian to be bounded below, but you would pay for it by selecting an amplitude $\psi_x$ (and hence an amplitude $\psi_m$ in theorem \ref{maintheorem}) which is only bounded above by a sort of maximal function of the derivatives of the original phase as opposed to a simpler pointwise supremum of those derivatives.

\section{Applications and Extensions} \label{therest}

\subsection{A canonical construction of scale function $R(x)$.}

The selection of the scale function $R$ may, in the general case when scales are understood in the multiparameter sense, be more of an art than a science.  However, in the single scale case (corresponding to scales parametrized by the integers $\Z$), it is relatively easy to see that there is always, in some sense, a ``best'' choice of $R$.  Fix an $\epsilon$ and some implicit constants, and let $\mathcal I$ be the subcollection of balls $B_{j}(x)$ as $(j,x)$ ranges over pairs $ \Z \times \Omega$ for which 
\[ |d_x^m|_j \lesssim \sum_{k=1}^{m-1} \epsilon^{k-m} |d_x^k f|_j \mbox{ and } \sup_{y \in B_j(x)} |d_y f|_{j-1} \lesssim \inf_{y \in B_j(x)} |d_y|_{j-1}. \]
Let $\mathcal I$ denote this subset of $\Z \times \Omega$. 
 We will define $R_{\mathcal I}(x)$ to be the supremum over all those indices $j$ such that $B_{j+1}(x)$ exists and $B_{j'}(x') \in I$ for any ball $B_{j'}(x') \subset B_{j+1}(x)$ with $j' \leq j$.

The claim is that this mapping $R_{\mathcal I}$ satisfies the necessary regularity condition \eqref{lipschitz}.  Specifically, let $E$ be the set of points $x \in \Omega$ at which $R_{\mathcal I}(x)$ is well-defined and finite.  For any $x,x' \in E$, suppose that $B_{R_{\mathcal I}(x)}(x) \cap B_{R_{\mathcal I}(x')}(x') \neq \emptyset$.  By compatibility and nesting, $B_{R_{\mathcal I}(x)}(x')$ exists and is contained in $B_{R_{\mathcal I}+1}(x)$, and by the definition of $R_{\mathcal I}(x)$, it must therefore be the case that all balls of scale at most $R_{\mathcal I}(x)-1$ which are contained in $B_{R_{\mathcal I}(x)}(x')$ must also belong to $I$.  We thus conclude that $R_{\mathcal I}(x') \geq R_{\mathcal I}(x) - 1$.  By symmetry we have $|R_{\mathcal I}(x') - R_{\mathcal I}(x)| \leq 2$.  Because $B_{R_{\mathcal I}(x)}(x)$ actually belongs to $\mathcal I$, it is also an immediate consequence that both \eqref{highderiv} and \eqref{firstderiv} will hold.  Thus we have established in a very explicit way that the scale function can simply be taken to measure the largest scale on which \eqref{highderiv} and \eqref{firstderiv}.  If the phase $f$ is finite type in the sense mentioned at the end of section \ref{ftsec} (which will happen when $f$ is polynomial or, in the Carnot-Carath\'{e}odory context, when $f$ is annihilated by applying any sufficiently long sequence of the distinguished vector fields), then the scale function will simply measure the largest scale on which the magnitude of the derivative is constant.

\subsection{Quantitative results for convex phases}
\label{bnwsec}

In this section we establish several results relating theorem \ref{maintheorem} to the earlier theorem of Bruna, Nagel, and Wainger.  Specifically we consider the question of establishing uniform radial tameness for an appropriate phase or phases and then give the proof of corollary \ref{bnwcor}.
We begin with two propositions which establish uniform radial tameness for first polynomial convex phases and then for convex phases of finite type (in the same sense meant by Bruna, Nagel, and Wainger).

\begin{proposition}
Suppose $f$ is a convex function on a convex open set $\Omega \subset \R^d$ containing the origin and that $f$ and its gradient vanish at the origin.  If $f$ is a polynomial then it is uniformly radially tame to order $m$ with a constant depending only on the degree and $m$.
\end{proposition}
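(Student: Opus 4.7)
The plan is to restrict $f$ to a single ray and reduce to a one-variable question about convex polynomials, where the tameness inequality will follow from the combination of a convex supporting-line inequality and Markov's inequality. Fix any unit vector $v$ and set $g(t) := f(tv)$ on the interval $[0,T_v]$ on which the ray lies in $\Omega$. Then $g$ is a polynomial of degree at most $D := \deg f$, convex (since it is the restriction of a convex function to an affine line), with $g(0) = 0$ and $g'(0) = v \cdot \nabla f(0) = 0$; in particular $g \geq 0$ on $[0,T_v]$. I must produce a constant $C = C(D,m)$ independent of $v$ such that
\[
|g(t_0)|^{k-1} |g^{(k)}(t_0)| \leq C |g'(t_0)|^k
\]
for every $t_0 \in [0,T_v]$ and every $k \in \{2,\ldots,m\}$.

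Next I exploit the scale invariance emphasized right after the definition of tameness. If $g(t_0) = 0$, convexity forces $g \equiv 0$ on $[0,t_0]$ and, since $g$ is a polynomial, $g \equiv 0$ identically, making the inequality trivial. Otherwise $g(t_0) > 0$ and I rescale by setting $h(s) := g(t_0 s)/g(t_0)$. This is a polynomial of degree at most $D$, convex and nonnegative on $[0, T_v/t_0]$, with $h(0) = h'(0) = 0$ and $h(1) = 1$, and a direct computation gives
\[
\frac{|g(t_0)|^{k-1}|g^{(k)}(t_0)|}{|g'(t_0)|^k} = \frac{|h^{(k)}(1)|}{h'(1)^k}.
\]
Thus it suffices to bound this rescaled quantity uniformly over all polynomials $h$ of degree at most $D$ satisfying the normalized properties above.

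The required bound rests on two elementary estimates. For the denominator, the convex supporting-line inequality $h(0) \geq h(1) + h'(1)(0-1)$ yields $h'(1) \geq h(1) - h(0) = 1$, so $h'(1)^k \geq 1$. For the numerator, observe that $h$ is convex with $h'(0) = 0$, so $h'$ is nondecreasing and nonnegative on $[0,1]$, making $h$ itself nondecreasing there, so $0 \leq h(s) \leq h(1) = 1$ on $[0,1]$. By Markov's inequality applied iteratively, a polynomial of degree at most $D$ with sup-norm at most $1$ on $[0,1]$ has $|h^{(k)}(1)| \leq M_{k,D}$ for a constant depending only on $k$ and $D$. Combining gives
\[
\frac{|h^{(k)}(1)|}{h'(1)^k} \leq M_{k,D},
\]
so $g$ is tame with constant $C = \max_{2 \leq k \leq m} M_{k,D}$. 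Because this constant is independent of $v$, uniform radial tameness follows.

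There is no real obstacle in this argument; the point is simply that the supporting-line estimate converts what looks like a delicate ratio into a bounded quantity, after which polynomial-degree considerations finish the proof.
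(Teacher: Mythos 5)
Your proof is correct and follows essentially the same route as the paper's: restrict to a ray, use the scale invariance of the tameness ratio to normalize, bound the rescaled quantity $t_0^k|f^{(k)}(t_0)|/f(t_0)$ by a constant depending only on the degree, and use the convex supporting-line inequality to obtain $t_0 f'(t_0) \geq f(t_0)$. The only difference is that where you invoke the classical Markov inequality for the polynomial derivative bound, the paper rescales its Theorem \ref{finitetype} (inequality \eqref{bigfinish}), which for polynomials amounts to the same Markov-type estimate, so your argument is a self-contained and equally valid variant.
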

\begin{proof}
It suffices to restrict attention to a single ray beginning at the origin.  Suppose $f(t)$ is a convex polynomial on $[0,T]$ with $f(0) = f'(0) = 0$.  Rescaling \eqref{bigfinish} to the interval $(0,t_0)$, we will have that
\begin{equation} t_0^{k} \sup_{0 < t < t_0} |f^{(k)}(t)| \lesssim \sup_{0 < t < t_0} |f(t)| \label{build} \end{equation}
for any $k$ with implicit constant depending only on $k$ and the degree of $f$.  Convexity implies for all $t$ that
\[ f'(t) \geq t^{-1} f(t) \]
so we conclude from \eqref{bigfinish} and monotonicity of $f$ that
\[ t_0 f'(t_0) \approx f(t_0) \]
for all $t_0 \in (0,T)$, and so we can additionally conclude that
\[ |f(t_0)|^{k-1} |f^{(k)}(t_0)| \lesssim |f'(t_0)|^k \]
for any $t_0 \in (0,T)$ with implicit constant depending only on the degree of $f$ and on $k$.
\end{proof}

\begin{proposition}
Let $f$ be a smooth convex function on convex open set $\Omega \subset \R^d$.  Suppose that $f$ is finite type in the sense of Bruna, Nagel, and Wainger, namely, that every tangent line to $f$ has only finite order of contact.  For $x_0$ belonging to any compact convex subset $\Omega' \subset \Omega$, the convex functions
\[ f_{x_0}(x) := f(x) - f(x_0) - (x-x_0) \cdot \nabla f(x_0) \]
are uniformly radially tame on $\Omega'$ to any finite order $m$ with respect to the origin point $x_0$, and the constant is bounded over all $x_0 \in \Omega'$.
\end{proposition}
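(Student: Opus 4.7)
The plan is to reduce to a one-variable problem along rays from $x_0$, split the argument by scale, and use compactness of $\Omega' \times S^{d-1}$ to obtain uniformity in each regime.

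Fix $x_0 \in \Omega'$ and unit $v \in S^{d-1}$, and let $g(t) := f_{x_0}(x_0 + tv)$ on $[0, T(x_0, v)]$ (the portion of the ray lying in $\Omega'$). Then $g$ is smooth, convex, nonnegative with $g(0) = g'(0) = 0$, and $g(t), g'(t) > 0$ for $t > 0$ by convexity and the finite contact of the tangent line at $x_0$. The BNW hypothesis combined with compactness yields a uniform upper bound $\bar M$ on the type: the open sets $\{(y, v) : \max_{2 \leq j \leq N}|\partial_v^j f(y)| > 0\}$ cover the compact space as $N \to \infty$, and continuity of the relevant maxima then produces a constant $c_0 > 0$ with $\max_{2 \leq j \leq \bar M}|g^{(j)}(t)| \geq c_0$ uniformly along every ray. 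Set $m' := \max(m, \bar M + 1)$ and $\tau_0 := (c_0/\|f\|_{C^{m'}(\Omega')})^{1/(m' - \bar M)}$.

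In the small-scale regime $t_0 \leq \tau_0$, apply theorem \ref{finitetype} with $\ell = 0$ to $g$ on $[0, 2 t_0]$ rescaled to the unit ball. The choice of $\tau_0$ ensures that the hypothesis \eqref{sharpcond} holds uniformly with $\epsilon = 1$, and the conclusion \eqref{bigfinish} yields $t_0^k |g^{(k)}(t_0)| \lesssim g(2t_0)$, matching the polynomial-case conclusion. Combined with convexity ($g(t_0) \leq t_0 g'(t_0)$) and the uniform doubling estimate $g(2 t_0) \lesssim g(t_0)$, the $k = 1$ case gives $t_0 g'(t_0) \approx g(t_0)$, and substitution produces $g(t_0)^{k-1}|g^{(k)}(t_0)| \lesssim (g(t_0)/t_0)^k \approx g'(t_0)^k$ as in the polynomial proposition. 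In the complementary large-scale regime $t_0 \geq \tau_0$, a compactness argument gives uniform positive lower bounds on $g(\tau_0)$ and $g'(\tau_0)$ across all rays — were either to fail, a convergent subsequence of rays would yield a limit function $g^*$ with $g^*$ or $(g^*)'$ vanishing at $\tau_0$ and hence $g^* \equiv 0$ on $[0, \tau_0]$ by convexity, contradicting the lower bound $c_0$ on the derivatives of $g^*$ at the limit point — and the tameness inequality follows directly from these lower bounds together with the uniform upper bound $\|f\|_{C^m(\Omega')}$ on the $k$-th derivative.

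The main obstacle is the uniform doubling bound $g(2 t_0) \lesssim g(t_0)$, equivalently a uniform upper bound on the logarithmic slope $M(t) := t g'(t)/g(t)$ over the compact parameter set $\{(x_0, v, t) : x_0, x_0 + tv \in \Omega', v \in S^{d-1}\}$. On each ray, $M$ extends continuously from $(0, T]$ to $t = 0$ with limit equal to the BNW type $M(x_0, v) \leq \bar M$, so $M$ is bounded on each ray; upper semi-continuity of the type combined with compactness of the parameter set lifts this to a uniform bound $M_{\max}$, whence $g(2 t_0)/g(t_0) = \exp(\int_{t_0}^{2 t_0} M(s)/s \, ds) \leq 2^{M_{\max}}$ delivers the required doubling.
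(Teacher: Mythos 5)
Your overall architecture (restrict to rays, use compactness of $\Omega'\times{\mathbb S}^{d-1}$ to extract a uniform order $\bar M$ and a uniform lower bound $c_0$ on $\max_{2\le j\le \bar M}|\partial_v^jf|$, verify \eqref{sharpcond}, apply theorem \ref{finitetype}, and finish with convexity exactly as in the polynomial proposition) is sound and is essentially the paper's route. The genuine gap is at the step you yourself flag as the main obstacle: the uniform doubling bound $g(2t_0)\lesssim g(t_0)$, equivalently a uniform bound on $M(t)=tg'(t)/g(t)$ over all rays and scales. Your justification --- ``$M$ is bounded on each ray since it tends to the type as $t\to0^+$, and upper semi-continuity of the type plus compactness lifts this to a uniform bound'' --- is not a proof. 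The per-ray supremum $\sup_{0<t\le T}M(x_0,v,t)$ is a supremum of functions continuous in $(x_0,v)$, hence only lower semi-continuous, so pointwise finiteness plus compactness yields nothing. What you actually need is joint control of $M(x_n,v_n,t_n)$ as $t_n\to0$ with $(x_n,v_n)\to(x_0,v)$, i.e.\ that the convergence $M(t)\to p(x_0,v)$ is uniform in the ray; upper semi-continuity of the pointwise type $p(x_0,v)$ says nothing about $M$ at small positive $t$ along nearby rays, and ruling out rays whose ``effective type'' at scale $t_n$ is much larger than the limiting type is precisely the quantitative uniform statement being proved. As written, the doubling lemma is the conclusion in disguise, supported only by a soft argument that does not deliver it.

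Fortunately the detour is unnecessary, and removing it closes the gap. Apply the rescaled version of theorem \ref{finitetype} on the interval $(0,t_0)$ itself rather than $(0,2t_0)$: your verification of \eqref{sharpcond} with $\epsilon=1$ goes through verbatim (the lower bound $c_0$ holds at every point of the interval), and since $g$, $g'$, $g^{(k)}$ are continuous up to $t_0$, the open-interval suprema in \eqref{bigfinish} dominate the endpoint values. This gives $t_0^k|g^{(k)}(t_0)|\lesssim g(t_0)$ and $t_0g'(t_0)\lesssim g(t_0)$ directly, with no doubling needed, after which convexity ($g(t_0)\le t_0g'(t_0)$) yields $t_0g'(t_0)\approx g(t_0)$ and \eqref{tame} follows as in the polynomial case; if you still want the doubling bound, it is then a corollary of $M\lesssim1$ at small scales rather than an input. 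For comparison, the paper avoids the scale-splitting altogether: it only uses the derivative lower bound at the base point $x_0$, verifies \eqref{sharpcond} for the restriction to the full segment $[x_0,x]$ with $\epsilon$ a small multiple of $|x-x_0|$, and then quotes the polynomial argument; your large-scale regime (uniform lower bounds on $g(\tau_0)$, $g'(\tau_0)$ by a limiting argument) is correct but becomes superfluous in either repair.
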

\begin{proof}
The finite type condition guarantees that for every pair of a point $x_0$ and unit vector $\omega$, there is a finite $k \geq 2$ such that
\[ \sum_{i=2}^k |(\omega \cdot \nabla)^i f(x_0)| \neq 0. \]
Since this sum (for fixed $k$) is a continuous function of $x_0$ and $\omega$, we may assume by compactness that there is a single $k$ such that
\[ \sum_{i=2}^k |(\omega \cdot \nabla^i f(x_0)| \geq C_{\Omega'} > 0 \]
for any pair $(x_0, \omega) \in \Omega' \times {\mathbb S}^{d-1}$.  Now we can conclude that
\[  \sup_{1 \leq i \leq k} \sup_{0 < t < 1}  |x-x_0|^{i} \left| \frac{d^i}{dt^i} f( x_0 + t(x-x_0)) \right| \geq C_{\Omega'} \]
whenever $x_0, x$ belong to $\Omega'$.  Now
\begin{align*}
\sup_{0 < t < 1}  |x-x_0|^{m} & \left| \frac{d^m}{dt^m} f( x_0 + t(x-x_0)) \right| \lesssim ||f||_{C^m(\Omega')} \\ & \lesssim \frac{||f||_{C^m(\Omega')}}{C_{\Omega'}} \sup_{1 \leq i \leq k} \sup_{0 < t < 1}  |x-x_0|^{i} \left| \frac{d^i}{dt^i} f( x_0 + t(x-x_0)) \right| . \end{align*}
This inequality guarantees that \eqref{sharpcond} will hold uniformly on $(0,1)$ for the functions $t \mapsto f(x_0 + t(x-x_0)) - f(x_0) - t (x-x_0) \cdot \nabla f(x_0)$ with $x,x_0 \in \Omega'$ provided that $\epsilon$ is chosen to be a suitably small constant multiple of $|x-x_0|$.  Thus we may conclude
\[ \sup_{0 < t < 1}  |x-x_0|^{k}  \left| \frac{d^k}{dt^k} f_{x_0}( x_0 + t(x-x_0)) \right| \lesssim |f(x) - f(x_0) - (x-x_0) \cdot \nabla f(x_0)| \]
uniformly for $x,x_0 \in \Omega'$ for any fixed choice of $k$.  From here the rest of the proof follows exactly as it did in the previous proposition.
\end{proof}

Now we can take up the proof of corollary 1.  As a reminder, for uniformly radially tame $f$, we seek to establish the inequality
\begin{equation*} \left| \int_{\Omega} e^{i \lambda f(x)} (f(x))^\ell \psi(x) dx \right| \lesssim \int_{\Omega} \frac{|f(x)|^\ell \sum_{k=0}^{m-1} |(x \cdot \nabla)^k \psi(x)|  }{(1 +  |\lambda f(x)|)^{m-1}} dx  \end{equation*}
with implicit constant depending only on $m$, $d$, the nonnegative integer $\ell$, and the constant of uniform radial tameness.  Under the circumstances, it suffices to assume $\lambda = 1$ since uniform radial tameness is invariant under scalar multiplication of $f$ as are the condtions \eqref{lipschitz} through \eqref{firstderiv}.  Following the proof, we record how to establish the inequality
\begin{equation} \int_{\Omega} \frac{|f(x)|^\ell}{(1 +  |\lambda f(x)|)^{\ell+d+1}} dx \leq C_{d} |\lambda|^{-\ell} \left| \set{ x \in \Omega}{ |f(x)| < |\lambda|^{-1}} \right| \label{basicsub} \end{equation}
for our convex phase $f$ (where $d$ equals the dimension: $\Omega \subset \R^d$), which brings \eqref{sublev} in line with the results of Bruna, Nagel, and Wainger (when we require $m \geq d+\ell + 2$).

\begin{proof}[Proof of corollary \ref{bnwcor}.]
In this case, we apply the machinery of theorem \ref{maintheorem} when $\Omega$ is equal to $\R^d$.  We will let the indices $j$ belong to $\Z$ and define
\begin{equation} \Phi_{j,x}(t) := e^{3^{j} t} x. \label{convexdef}
\end{equation}
for $x \neq 0$ and $\Phi_{j,0}(0) = 0$ for all $j$; in other words, the balls $B_j(x)$ are intervals in the ray from the origin through $x$ when $x \neq 0$ and are simply points when $x = 0$.  These rays and the point $\{0\}$ are exactly the leaves.  The basic conditions of theorem \ref{maintheorem} are easily checked (the homogeneous space structure on the real line given by dyadic intervals centered at $x$ is well-known, and this present construction is only a trivial variation).  In this case, regularity of measure is established by means of the polar coordinates formula:
\[ \int f = \int_{{\mathbb S}^{d-1}} \left[ \int_0^\infty f(r \omega) r^{d-1} dr \right] d \sigma(\omega) \]
(so we take the measure on the point $\{0\}$ to be zero).  In particular, the smoothness conditions on the Jacobian will hold as long as we restrict $j \leq C$ at every point for some fixed $C$ (which is an acceptable restriction from the point of view of the compatibility condition).  

Now suppose that the phase $f$ is uniformly radially tame on the domain $\Omega \subset \R^d$ containing the origin.   Convexity implies the inequalities 
\begin{equation} f(x) \leq x \cdot \nabla f(x)\label{convexity} \end{equation}
for all $x \in \Omega$.   Momentarily fix attention on a single ray emanating from the origin (and if $f$ is not strictly convex on this ray, assume that we are far enough from the origin that $f \neq 0$).  Now assuming that $f$ is uniformly radially tame to order $m \geq 2$ with constant $C$, we have
\begin{align*}
 \left| (x \cdot \nabla) \left( \frac{f(x)}{x \cdot \nabla f(x)} \right)  \right| & =  \left| 1 - \frac{f(x) (x \cdot \nabla )^2 f(x)}{( x \cdot \nabla f(x))^2} \right| \\
 &  =  \left| 1 - \frac{ f(x) \left[ |x|^2 \frac{d^2}{dr^2} f(x) + |x| \frac{d}{dr} f(x) \right]}{ |x|^2 (\frac{d}{dr} f(x))^2} \right| \\
 & = \left| 1 - \frac{f(x) \frac{d^2}{dr^2} f(x)}{(\frac{d}{dr} f(x))^2} - \frac{f(x)}{x \cdot \nabla f(x)} \right| \leq C.
 \end{align*}
Integrating along rays will give that
\[ \left| \frac{f( \rho x)}{ \rho x \cdot \nabla f( \rho x)} - \frac{f(x)}{x \cdot \nabla f(x)} \right| \leq C \ln \rho \]
for any $\rho > 1$ (with a similar inequality when $\rho < 1$).
We will define $R(x)$ (when $\nabla f(x) \neq 0$) to be the largest integer $j$ such that
\[ 3^j \leq \frac{1}{4C} \frac{f(x)}{x \cdot \nabla f(x)}. \]
By \eqref{convexity}, this upper bound is at most $\frac{1}{4C}$, so $R(x)$ is uniformly bounded above for each $x$.   Fix an $x$ and take $\rho := e^{3^{R(x)} t}$ for $-1 < t < 1$.  We conclude that
\begin{equation} \left| \frac{f( \rho x)}{ \rho x \cdot \nabla f( \rho x)} - \frac{f(x)}{x \cdot \nabla f(x)} \right| \leq C 3^{R(x)} \leq \frac{1}{4} \frac{f(x)}{x \cdot \nabla f(x)}, \label{establishcomp} \end{equation}
from which we conclude that $R(x)$ and $R(y)$ should differ by at most $1$ when $y$ belongs to $B_{R(x)}(x)$.  The hypothesis \eqref{lipschitz} now follows immediately with $N = 2$ by the triangle inequality (since $R(x)$ and $R(z)$ both differ by at most one from $R(z)$ when $z$ belongs to the intersection).

Next we compute:
\begin{align*}
|d^k_x f|_{R(x)}  := & \sup_{1 \leq k' \leq k} \left|  \frac{d^{k'}}{dt^{k'}}  \left.  f( e^{3^{R(x)} t} x) \right|_{t=0} \right| = \sup_{1 \leq k' \leq k} \left| 3^{k' R(x)} (x \cdot \nabla)^{k'} f(x) \right| \\
 \approx & \sup_{1 \leq k' \leq k} \left| \frac{(f(x))^{k'}}{(x \cdot \nabla f(x))^{k'}} (x \cdot \nabla)^{k'} f(x) \right|.
\end{align*}
By virtue of the assumption of uniform radial tameness to order $m$ (and the Leibniz rule), we can therefore conclude that $|d^k_x f|_{R(x)} \approx f(x)$ provided $k \leq m$.  The condition \eqref{highderiv} follows immediately.  
Finally, since \eqref{establishcomp} implies that
\[ \frac{3}{4} \frac{f(x)}{x \cdot \nabla f(x)} \leq \frac{f(y)}{ x \cdot \nabla f(y)} \leq \frac{5}{4} \frac{f(x)}{x \cdot \nabla f(x)} \]
for any $y \in B_{R(x)}(x)$,  it must be the case that
\[ \rho \frac{d}{d \rho} \ln f(\rho x) \leq \frac{4}{3} \frac{x \cdot \nabla f(x)}{f(x)} \]
for any $\rho$ with $\rho x \in B_{R(x)}(x)$, so we again integrate with respect to the variable $\rho$ to conclude that
\[ \left| \ln f(\rho x) - \ln f(x) \right| \leq \frac{4}{3} \frac{x \cdot \nabla f(x)}{f(x)} \left| \ln \rho \right| \leq \frac{1}{3C}. \]
Since we know $|d_y f|_{R(y)} \approx f(y)$, we have thus established the final hypothesis \eqref{firstderiv}.

Having established \eqref{lipschitz} through \eqref{firstderiv}, we may apply the conclusion \eqref{mainineq}:
\[  \left| \int_{\Omega} e^{i \lambda f(x)} (f(x))^{\ell} \psi(x) dx \right| \lesssim \int_{\Omega} \frac{\sum_{k=0}^{m-1} |d_x^k f^\ell \psi|_{R(x)}}{(1 +  \lambda f(x))^{m-1}} dx \]
(where we have already simplified the denominator since $|d_x f|_{R(x)} \approx f(x)$).  By the Leibniz rule and our estimates for $|d^k_x f|_{R(x)}$ we will have
\[ \sum_{k=0}^{m-1} |d_x^k f^\ell \psi|_{R(x)} \lesssim (f(x))^{\ell} \sum_{k=0}^{m-1} |d_x^k \psi|_{R(x)} \]
(with constant depending on $m$ and $\ell$).
The only remaining modification is that
\[ \sum_{k=0}^{m-1} |d_x^k \psi|_{R(x)} \lesssim \sum_{k=0}^{m-1} |(x \cdot \nabla)^k \psi(x)| \]
since $R(x)$ is bounded uniformly above.
\end{proof}

Lastly we turn our attention to \eqref{basicsub}. Assume $\lambda > 0$ and let \begin{align*}  F_0 & := \set{x \in \Omega}{f(x) < \lambda^{-1}}, \\ F_k & := \set{x \in \Omega}{2^{k-1} \lambda^{-1} \leq f(x) < 2^{k} \lambda^{-1}}, \ k > 0. \end{align*}
  We have
\[ \int_{\Omega} \frac{(f(x))^{\ell}}{(1 + \lambda f(x))^{d+\ell+1}} dx \leq \lambda^{-\ell} \sum_{k=0}^{\infty} \int_{F_k} \frac{1}{(1 + \lambda f(x))^{d+1}} dx. \]
Convexity of $f$ (and $f(0), \nabla f(0) = 0$) implies that $f(\alpha x) \geq \alpha f(x)$ when $\alpha \geq 1$.  In particular this means that $F_k \subset 2^k F_0$ for each $k$.  Consequently
\[\sum_{k=0}^{\infty} \int_{F_k} \frac{1}{(1 + \lambda f(x))^{d+1}} dx \leq  |F_0| + \sum_{k=1}^\infty (2^{-(k-1)})^{d+1} 2^{dk} |F_0|. \] 
This geometric series converges and yields a finite constant for \eqref{basicsub}.

\subsection{Carnot-Carath\'{e}odory}

In this section we recall the framework of Street \cite{street2011} and several of the results proved there.  The main purpose of doing so is to establish theorem \ref{ccworks}, which exploits these results to show that Street's Frobenius theorem produces a family of balls satisfying compatibility, engulfing, weak doubling, smooth nesting, and smooth engulfing (properties (i) through (v) from the introduction).  Moreover, it establishes that the definition of leaves from the introduction coincides with leaves in the Frobenius theorem.  Finally, it provides an estimate for the smoothness of the Jacobian when integrating Lebesgue measure on a leaf.  This does not explicitly prove the regularity of measure hypothesis, since we will need a global measure $\mu$ which factors as Lebesgue measure (up to smooth density) onto the leaves.  It appears that such a measure may not exist in certain exceptionally pathological cases.  However, if the dimension of the leaves is constant on some neighborhood, then the classical coarea formula combines with the Jacobian estimate in theorem \ref{ccworks} to guarantee that the $d$-dimensional Lebesgue measure factors locally in exactly the way required by regularity of measure.  Beyond this, was already seen in the previous section (near the origin), it is often possible to establish regularity of measure directly even when the dimension of the leaves is not constant.

We now recall the framework of Street \cite{street2011}.  Begin with a finite collection of $C^1$ vector fields $X_1,\ldots,X_q$ on $\Omega$.  Each vector field has a nonzero ``formal degree'' $d_1,\ldots,d_q$ belonging to $[0,\infty)^d$.  Fix some compact subset $K \subset \Omega$ and $\xi \in (0,1]^d$.  Suppose that for any $x_0 \in K$ and any $a = (a_1,\ldots,a_q) \in (L^\infty([0,1]))^q$ with $|| \sqrt{ \sum_{i=1}^q |a_i|^2}||_{L^\infty} < 1$,  the ODE
\[ \gamma'(t) = \sum_{i=1}^q \xi^{d_i}  a_i(t) X_i(\gamma(t)) \]
has a (weak) solution on $[0,1]$ with initial condition $\gamma(0) = x_0$.

Next fix a set $A \subset \set{ \delta \in [0,1]^{d}}{\delta \neq 0, \ \delta \leq \xi}$.  This set $A$ represents the allowable multiscales $\delta$ (where ``allowable'' is in principle determined by the context in which the Carnot-Carath\'{e}odory machinery is being applied).  The present purposes, we add a constraint to the collection $A$ of allowable $\delta$:  we assume that $\delta = (\delta_1,\ldots,\delta_d) \in A$ implies $(\epsilon \delta_1,\ldots, \epsilon \delta_d) \in A$ for any $\epsilon \in (0,1)$.  Note that this isotropic dilation condition on $A$ will generally be true in applications of interest; in particular it holds in the case of ``weak comparability.''  The Carnot-Carath\'{e}odory ball $B_{(X,d)}(x_0,\delta)$ is defined as the set $y \in \Omega$ such that there exists an $a \in (L^\infty([0,1]))^q$ with $|| \sqrt{ \sum_{i=1}^q |a_i|^2}||_{L^\infty} < 1$ as before such that the necessarily unique solution of
\[ \gamma'(t) = \sum_{i=1}^q \delta^{d_i}  a_i(t) X_i(\gamma(t)) \]
with $\gamma(0) = x_0$ has $\gamma(1) = y$.

The assumptions begin with the integrability condition: for every $\delta \in A$ and $x \in K$, it must be the case that
\[ [ \delta^{d_i} X_i , \delta^{d_{i'}} X_{i'}] = \sum_{k=1}^q c^{k,\delta,x}_{i,i'} \delta^{d_k} X_k \]
at every point $y \in B_{(X,d)}(x,\delta)$.  Next assume that for some $m \geq 2$, the vector fields are $C^m$ on $B_{(X,d)}(x,\xi)$ for every $x \in K$, that $X^{\alpha} c_{i,i'}^{k,\delta,x}$ is continuous on this same ball whenever $|\alpha| \leq m$, and 
\begin{align} \sup_{x \in K} ||X_i||_{C^m(B_{(X,d)}(x,\xi))} & < \infty, \label{admissible1} \\
\mathop{\sup_{\delta \in A}}_{x \in K} \sum_{|\alpha| \leq m} || (\delta X)^\alpha c_{i,i'}^{k,\delta,x}||_{C^0(B_{(X,d)}(x,\xi))} & < \infty. \label{admissible2}
\end{align}
(Note that the norms are taken with respect to some implicit, fixed coordinate system on $\Omega$.)
For each $x \in K$, let $n_0(x)$ be the dimension of the span of $X_1,\ldots,X_q$ at $x$.  For each $\delta \in A$, Street (in agreement with Nagel, Stein, and Wainger) identifies an appropriate subcollection $J(x,\delta) \subset \{1,\ldots,q\}$ and defines a mapping on a neighborhood of the origin in $\R^{J(x,\delta)}$
\[ \Phi_{x,\delta}(u) := \exp \left( \sum_{i \in J(x,\delta)} u_i \delta^{d_i} X_{i} \right)x. \]

\begin{theoremst}[Street \cite{street2011}]
There are $m$-admissible constants $\rho$, $r_2 < r_1$ such that the following hold for all $\delta \in A$ and $x \in K$:
\begin{itemize}
\item $B_{(X,d)}(x, \rho \delta) \subset \Phi_{x,\delta}( \B^{n_0(x)}(r_2)) \subset \Phi_{x,\delta}(\B^{n_0(x)}(r_1)) \subset B_{(X,d)}(x,\delta)$.
\item $\Phi_{x,\delta}(u)$ is one-to-one on $\B^{n_0(x)}(r_1)$
\item If $Y_i$ is the pullback of $\delta^{d_i} X_i$ under the map $\Phi_{x,\delta}$ on the ball $\B^{n_0(x)}(r_1)$, then $||Y_i||_{C^m} \lesssim 1$.  Furthermore, 
\[ (Y_{J(x,\delta)_1},\ldots,Y_{J(x,\delta)_{n_0(x)}}) = (I + B(x,u)) \nabla_u \]
for some $C^m$ matrix $B(x,u)$ of norm at most $\frac{1}{2}$.
\item For all $u \in B^{n_0(x)}(r_1), |\det_{n_0(x) \times n_0(x)} d \Phi_{x,\delta}(u)| \approx |\det_{n_0(x) \times n_0(x)} \delta X(x)|$.
\end{itemize}
Here $\delta X$ is the matrix with columns $\delta^{d_i} X_i$ and $\det_{k \times k}$ is the vector whose entries are the determinants of all $k \times k$ minors of that matrix. An $m$-admissible constant is one which depends only on upper bounds of \eqref{admissible1}, \eqref{admissible2}, the dimension of $\Omega$, $q$, $d$, lower bounds for the coordinates of $\xi$, and upper and lower bounds for the coordinates of $\sum d := (\sum_{i=1}^d d_1^i,\ldots, \sum_{i=1}^d d_{q}^i)$. \label{streetheorem}
\end{theoremst}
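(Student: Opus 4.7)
The plan is to follow the standard strategy for quantitative Frobenius-type theorems: first choose a good basis of vector fields, then construct the exponential chart and control its derivatives via ODEs, then extract the matrix $B(x,u)$ from the integrability condition.

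First I would choose $J(x,\delta) \subset \{1,\dots,q\}$ of cardinality $n_0(x)$ so that the minor $\det_{n_0(x) \times n_0(x)}(\delta^{d_i} X_i(x))_{i \in J(x,\delta)}$ has magnitude comparable (up to a factor depending only on $q$ and $n_0(x)$) to the maximum over all $n_0(x)$-subsets of columns of $\delta X(x)$. This is the quantitative linear-algebra substitute for Cramer's rule and ensures $d\Phi_{x,\delta}(0)$, whose columns are exactly $\{\delta^{d_i} X_i(x)\}_{i \in J(x,\delta)}$, is boundedly invertible in an appropriate normalized sense.

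Next I would establish existence, the $C^m$ bound, and one-to-oneness of $\Phi_{x,\delta}$ on a ball of radius independent of $\delta$ and $x$. Since $\Phi_{x,\delta}$ is the time-one flow of a constant coefficient linear combination of the $\delta^{d_i} X_i$, standard Gronwall and $C^m$ ODE estimates (applied to the variational equations) using \eqref{admissible1} give $\|\Phi_{x,\delta}\|_{C^{m+1}} \lesssim 1$ uniformly, as well as the pullback bound $\|Y_i\|_{C^m} \lesssim 1$ required in the third bullet. The inverse function theorem, quantified in terms of $\|d\Phi_{x,\delta}(0)^{-1}\|$ and $\|\Phi_{x,\delta}\|_{C^2}$, then gives the one-to-one property and the two inclusions $B_{(X,d)}(x,\rho\delta) \subset \Phi_{x,\delta}(\B^{n_0(x)}(r_2))$ and $\Phi_{x,\delta}(\B^{n_0(x)}(r_1)) \subset B_{(X,d)}(x,\delta)$ for admissible constants $\rho, r_1, r_2$; the first inclusion uses that any CC curve of sublinear speed in $B_{(X,d)}(x,\rho\delta)$ lifts through the exponential chart, while the second follows from the fact that a straight-line path in $\B^{n_0(x)}(r_1)$ is itself a CC-admissible curve.

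The heart of the matter, and the main obstacle, is extracting the representation $(Y_{J_1},\dots,Y_{J_{n_0(x)}}) = (I+B(x,u))\nabla_u$ with $\|B\|_{C^m} \le \tfrac{1}{2}$ on a ball of admissible radius. At $u=0$ each $Y_{J_k}$ is exactly $\partial_{u_k}$ by construction of the exponential map, so $B(x,0)=0$. To propagate this along rays, one differentiates the flow identity $\Phi_{x,\delta}(su) = \exp(s \sum u_i \delta^{d_i} X_i) x$ and uses the integrability hypothesis $[\delta^{d_i}X_i, \delta^{d_{i'}} X_{i'}] = \sum_k c^{k,\delta,x}_{i,i'}\delta^{d_k} X_k$ at points along the flow: this gives a first-order ODE in $s$ for the matrix $B(x,su)$, whose coefficients are controlled by \eqref{admissible2}. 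Gronwall then yields $\|B(x,u)\|$ as small as we wish on a ball of admissible radius, which we shrink if necessary to guarantee $\|B\|_{C^m} \le \tfrac{1}{2}$. The hard accounting is to verify that all $C^m$ norms in this ODE argument remain bounded by admissible constants uniformly in $\delta$ and $x$, using the scaling-invariant form of \eqref{admissible2}.

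Finally, the Jacobian estimate $|\det_{n_0(x)\times n_0(x)} d\Phi_{x,\delta}(u)| \approx |\det_{n_0(x) \times n_0(x)} \delta X(x)|$ is an immediate consequence: on $\B^{n_0(x)}(r_1)$ the columns of $d\Phi_{x,\delta}(u)$ are, by the pullback relation, the images of the columns at $u=0$ under multiplication by $(I+B(x,u))^{-1}$ composed with $d\Phi_{x,\delta}(u)\cdot d\Phi_{x,\delta}(0)^{-1}$, both of which have bounded determinant and bounded inverse-determinant by the previous steps and by our choice of $J(x,\delta)$ to achieve a near-maximal minor.
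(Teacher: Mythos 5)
This statement is not proved in the paper at all: it is quoted verbatim from Street \cite{street2011} (his Theorem 6.4) and used later as a black box in the proof of theorem \ref{ccworks}, so the only meaningful comparison is with Street's own argument. Your outline does track the same broad strategy as the quantitative Frobenius-type proofs of Nagel--Stein--Wainger, Tao--Wright, and Street: pick $J(x,\delta)$ by a near-maximal $n_0(x)\times n_0(x)$ minor of $\delta X(x)$, build the exponential chart, and extract the matrix $B(x,u)$ from the integrability hypothesis by an ODE along rays with $B(x,0)=0$. The Jacobian comparability at the end is also obtained essentially the way you describe once the earlier bullets are in hand.

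The genuine gap is in the step you treat as routine: you assert that $\|Y_i\|_{C^m}\lesssim 1$ (and $\|\Phi_{x,\delta}\|_{C^{m+1}}\lesssim 1$) follows from \eqref{admissible1} alone by standard Gronwall and variational-equation estimates in the ambient coordinates. The whole content of the theorem is uniformity in $\delta\in A$, and in the fixed coordinate system the chart $\Phi_{x,\delta}$ and its inverse degenerate anisotropically as the coordinates of $\delta$ shrink; ambient $C^{m+1}$ bounds on the flow do not convert into uniform $u$-derivative bounds on the pullbacks of $\delta^{d_i}X_i$, because each $u$-derivative of $Y_i$ costs a factor of the inverse differential of $\Phi_{x,\delta}$, which is not admissibly bounded. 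The uniform $C^m$ control of the pullbacks (for all $i$, not only $i\in J(x,\delta)$) is precisely where the commutator hypothesis \eqref{admissible2} must enter: one rewrites $u$-derivatives of the pullback system in terms of the brackets $[\delta^{d_i}X_i,\delta^{d_{i'}}X_{i'}]$, re-expands through the coefficients $c^{k,\delta,x}_{i,i'}$ and their scaled derivatives, and runs the ODE/Gronwall argument for the pullbacks and for $B(x,u)$ simultaneously; this is the bulk of Street's proof and cannot be decoupled into ``first $C^m$ bounds on $Y_i$ from \eqref{admissible1}, then $B$ from \eqref{admissible2}.'' Relatedly, the injectivity on $\B^{n_0(x)}(r_1)$ and the inclusion $B_{(X,d)}(x,\rho\delta)\subset \Phi_{x,\delta}(\B^{n_0(x)}(r_2))$ come out of the $(I+B(x,u))\nabla_u$ normalization together with uniform control of the pullbacks of all $q$ vector fields (so admissible curves started at $x$ cannot leave the chart), not from a quantified inverse function theorem in ambient coordinates, so as written those steps inherit the same non-uniformity.
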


We now come to the final theorem, which is essentially a repackaging of a number of Street's definitions and estimates to illustrate that he implicitly constructed a space of exactly the sort we have defined in the present work:
\begin{theorem} \label{ccworks}
Under the same hypotheses outlined above for theorem 6.4 of \cite{street2011}, define balls $B_j(x) := \Phi_{x,M^j}(\B^{n_0(x)}(r_1))$ and homeomorphisms $\Phi_{j,x}(u) := \Phi_{x,M^j}(r_1^{-1} u)$ when $M^j \in A$.  There is a choice of constant $M$ such that the hypotheses (i) through (v) of theorem \ref{maintheorem} are satsified when the indices $j$ are restricted to have $M^j \in A$ and each component of $j$ sufficiently negative.  Furthermore, the leaves of the foliation given by the Frobenius theorem are leaves in the sense of theorem \ref{maintheorem}, and 
\[ \frac{1}{\mathrm{Vol}(B_j(x))} \int_{B_j(x)} f d \mu_L = \int_{\B} f \circ \Phi_{j,x}(t) J_{j,x}(t) dt \]
when $\mu_L$ is the induced Lebesgue measure on the leaf $L$ for some nonnegative function $J_{j,x}$ with $J_{j,x} \approx 1$ and $||J_{j,x}||_{C^m} \lesssim 1$.
\end{theorem}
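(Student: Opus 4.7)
The plan is to verify each of the hypotheses (i)--(v) by translating Street's containment, smoothness, and volume statements in Theorem 6.4 into the language of the present paper, and then to read off the Jacobian estimate from the determinant bound. First I would fix $M$ large enough (concretely, $M^{-1}$ should be a small admissible constant relative to $\rho$ and $r_2/r_1$ in Street's theorem) so that for any $j$ with $M^j \in A$ one has the nested containments
\[ B_{(X,d)}(x,\rho M^j) \subset \Phi_{x,M^j}(\B^{n_0(x)}(r_2)) \subset B_j(x) \subset B_{(X,d)}(x,M^j) \subset B_{(X,d)}(x,\rho M^{j+1}). \]
With such an $M$, compatibility (i) and engulfing (ii) reduce to the statement that whenever the CC-balls at scales $M^j$ and $M^{j'}$ meet, the one at scale $M^{j-1}$ or $M^{j}$ contains the smaller, which follows immediately from the quasi-triangle inequality built into the definition of CC-balls (concatenation of subunit curves). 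Weak doubling (iii) I would obtain from the determinant estimate: $\mathrm{Vol}(B_j(x)) \approx |\det_{n_0(x) \times n_0(x)} M^j X(x)|$, combined with Street's (or equivalently Nagel--Stein--Wainger's) doubling property $\mathrm{Vol}(B_{j+1}(x)) \lesssim \mathrm{Vol}(B_j(x))$ and a standard disjoint-packing argument.

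Next I would establish smooth nesting (iv): the containment $B_{j-1}(x) \subset B_{(X,d)}(x,M^{j-1}) \subset B_{(X,d)}(x,\rho M^j)$ (for $M$ large) combined with Street's containment $B_{(X,d)}(x,\rho M^j) \subset \Phi_{x,M^j}(\B^{n_0(x)}(r_2))$ gives $\Phi_{j,x}^{-1}(B_{j-1}(x)) \subset \B(r_2/r_1)$, so the constant $c := r_2/r_1 < 1$ works. The real work is smooth engulfing (v). For this I would use Street's assertion that the pullback vector fields $Y_i$ satisfy $\|Y_i\|_{C^m} \lesssim 1$ on $\B^{n_0(x)}(r_1)$ and that the tuple $(Y_{J(x,\delta)_1},\dots,Y_{J(x,\delta)_{n_0(x)}})$ equals $(I+B)\nabla_u$ with $\|B\| \leq 1/2$, hence $\nabla_u$ is a $C^m$ combination of the $Y_i$. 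When two balls $B_j(x), B_{j'}(x')$ intersect with $|j-j'|$ fixed, I would chase a subunit curve from $x'$ to $x$ of length $O(M^{\max(j,j')})$ and express $\Phi_{j,x}^{-1}\circ\Phi_{j',x'}$ as the flow of the pullbacks of the rescaled $\delta^{d_i} X_i$ under $\Phi_{j,x}$. These pullbacks satisfy $C^m$ estimates inherited from those of the $Y_i$ (with a loss depending only on $|j-j'|$ coming from the ratio of scales), so the usual ODE-smoothness argument produces the required constant $C_{|j-j'|}$.

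For the leaf identification, I would appeal directly to Street's Frobenius theorem: the integral manifold through $x$ is covered by the images $\Phi_{x,\delta}(\B^{n_0(x)}(r_1))$ as $\delta$ varies, so a Frobenius leaf satisfies the first two properties of being a leaf by construction (any ball $B_j(x)$ meeting it lies inside it, and each such ball is relatively open in it because $\Phi_{j,x}$ is a homeomorphism onto an open subset of the leaf in its intrinsic topology); separability follows because the leaves are second countable as $C^m$ manifolds. Finally, for the Jacobian, the change of variables formula gives
\[ \int_{B_j(x)} f \, d\mu_L = \int_{\B^{n_0(x)}(r_1)} f\circ\Phi_{x,M^j}(u) |\det_{n_0(x)\times n_0(x)} d\Phi_{x,M^j}(u)|\, du, \]
and Street's determinant estimate says the Jacobian is pointwise comparable to $|\det_{n_0(x)\times n_0(x)} M^j X(x)|$, which in turn is comparable to $\mathrm{Vol}(B_j(x))$. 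Rescaling the domain by $r_1$ and dividing by $\mathrm{Vol}(B_j(x))$ yields a $J_{j,x} \approx 1$; the $C^m$ bound on $J_{j,x}$ follows from the $C^m$ bounds on the $Y_i$ (the Jacobian determinant is a polynomial expression in the components of $d\Phi$, which are controlled since the push-forwards of $\partial_{u_i}$ agree with linear combinations of the $Y_i$ with $C^m$ coefficients of norm $\lesssim 1$). The main obstacle I anticipate is the smooth engulfing estimate (v), since it requires tracking $C^m$ regularity through a composition of two different exponential charts at possibly very different scales, a step where the loss constant $C_{|j-j'|}$ must be extracted cleanly from Street's estimates and the ODE integration.
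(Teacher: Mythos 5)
Your treatment of (i)--(iv), weak doubling, the leaf identification, and the comparability $J_{j,x}\approx 1$ follows essentially the paper's route (concatenation/rescaling of subunit paths with $M^{-1}\le 2^{-p}\rho$, Street's doubling estimate plus a packing argument, $c:=r_2/r_1$). But the two steps you yourself flag as delicate contain genuine gaps. For smooth engulfing (v), you propose to write $\Phi_{j,x}^{-1}\circ\Phi_{j',x'}$ directly as the flow, in the $\Phi_{j,x}$-chart, of the pullbacks of the rescaled fields $(M^{j'})^{d_i}X_i$, starting from $\Phi_{j,x}^{-1}(x')$. This representation is not available: the pullback fields are controlled only on $\B^{n_0(x)}(r_1)$, i.e.\ on $B_j(x)$, while neither the base point $x'$ nor the exponential path $s\mapsto \Phi_{x',M^{j'}}(su)$ need lie in $B_j(x)$ when the two balls merely intersect; only the endpoint does. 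The paper resolves exactly this by passing to a third ball $B_{j''}(x'')$ of comparable scale containing both (taking $x''=x$), writing $\Phi_{x,\delta}^{-1}\circ\Phi_{x',\delta'}=(\Phi_{x,\delta''}^{-1}\circ\Phi_{x,\delta})^{-1}\circ(\Phi_{x,\delta''}^{-1}\circ\Phi_{x',\delta'})$, controlling the second factor as a flow of the uniformly $C^m$ pullback fields on the big ball, and controlling the inverse in the first factor via the comparability of Jacobian determinants $|\det d\Phi_{x,\delta}|\approx|\det \delta X(x)|\approx|\det \delta'' X(x)|\approx|\det d\Phi_{x,\delta''}|$ together with Cramer's rule. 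Your sketch contains neither the enlarged chart nor any mechanism (a Jacobian lower bound) for inverting a chart with $C^m$ control, so the key constant $C_{|j-j'|}$ is not actually produced.

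The Jacobian estimate has a similar normalization gap. Writing $\int_{B_j(x)} f\,d\mu_L=\int f\circ\Phi_{j,x}\,|\det_{n_0(x)\times n_0(x)} d\Phi|\,dt$ and using $|\det d\Phi(u)|\approx|\det M^jX(x)|\approx\mathrm{Vol}(B_j(x))$ does give $J_{j,x}\approx 1$, but your argument for $\|J_{j,x}\|_{C^m}\lesssim 1$ --- that the determinant is a polynomial in the entries of $d\Phi$, which are $C^m$-bounded --- only yields absolute bounds $|\partial_t^\alpha \det d\Phi|\lesssim 1$. After dividing by $\mathrm{Vol}(B_j(x))$, which can be arbitrarily small (the columns of $\delta X$ may be nearly degenerate), this gives $\|J_{j,x}\|_{C^m}\lesssim \mathrm{Vol}(B_j(x))^{-1}$, which is useless. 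What is needed is the relative estimate that derivatives of the determinant are dominated by the determinant itself; the paper obtains this from Street's Lemma 4.16 and Proposition 4.17 (the ratio $|\det d\Phi_{x,\delta}(u)|/|\det \delta X(\Phi_{x,\delta}(u))|$ is uniformly $C^m$) together with the identity from his Lemma 4.6 that $(\delta X)_i\det_{n_0(x)\times n_0(x)}(\delta X)$ equals an admissibly bounded smooth factor times $\det_{n_0(x)\times n_0(x)}(\delta X)$, followed by induction on the order of differentiation. Without some substitute for these inputs, the $C^m$ bound on $J_{j,x}$ does not follow.
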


\begin{proof}
Let $M$ be some constant greater than one to be determined momentarily.  For suitable $j \in \Z^d$ we define $B_j(x) := \Phi_{x,M^j}(\B^{n_0(x)}(r_1))$ and $\Phi_{j,x}(u) := \Phi_{x,M^j}(r_1^{-1} u)$.  We will first show that this system satisfies the axioms (i) through (vi).  In this case, the compatibility condition (i) is trivially satisfied because the set $A$ of admissible $\delta$ has not been taken to depend on $x$ and it has explicitly been assumed to be closed under contractions $\delta \mapsto M^{-1} \delta$.  It is essentially a matter of definitions to show that when $\delta' \leq \delta$ 
\[ B_{(X,d)}(y,\delta') \cap B_{(X,d)}(x,\delta) \neq \emptyset \implies B_{(X,d)}(y,\delta') \subset B_{(X,d)}(x, 2^p \delta) \]
for any $p$ with $p \sum_{i=1}^d d_k^i \geq 1$ for all $k=1,\ldots,q$.  This follows easily from concatenating paths and rescaling; note that $p$ is an admissible constant under Street's terminology.  As long as $M^{-1} \leq 2^{-p} \rho$ we have
\begin{align*}
 B_j(x) & \cap B_{j'}(y)  \neq \emptyset \Rightarrow B_{(X,d)}(x,M^j) \cap B_{(X,d)}(y,M^{j'}) \neq \emptyset \\
 \Rightarrow & B_{j'}(y) \subset B_{(X,d)}(y,M^{j'}) \subset B_{(X,d)}(x,2^p M^j) \\ & \hspace{40pt} \subset B_{(X,d)}(x, \rho M^{j+1}) \subset B_{j+1}(x).
\end{align*}

To establish weak doubling, we use the doubling condition from corollary 6.4 of \cite{street2011}:
\[ \mathrm{Vol} ( B_{(X,d)}(x,2 \delta)) \lesssim \mathrm{Vol}(B_{(X,d)}(x,\delta)) \]
for $\delta$ sufficiently small, where $\mathrm{Vol}$ represents the $n_0(x)$-dimensional Hausdorff measure.  We know explicitly from Street's paper that the measure of such a ball is never zero or infinity when the entries of $\delta$ are nonzero.  It is straightforward to see that the nesting property guarantees that the doubling property holds for the dyadic balls $B_j(x)$ as well.  Now suppose $B_{j}(x_1),\ldots, B_j(x_N)$ are mutually disjoint and that some ball $B_{j+1}(x)$ intersects all of the balls $B_{j+1}(x_k)$.  Then we have the containments $B_j(x) \subset B_{j+2}(x_k)$ and $B_{j}(x_k) \subset B_{j+2}(x)$ for each $k$.  But now the observations 
\begin{align*}
 \sum_{k=1}^N \mathrm{Vol}(B_j(x_k)) & \leq \mathrm{Vol}(B_{j+2}(x)), \\
\mathrm{Vol}(B_{j+2}(x)) & \approx \mathrm{Vol}(B_{j}(x)) \leq \mathrm{Vol}(B_{j+2}(x_{k}))  \approx \mathrm{Vol}(B_{j}(x_k))
\end{align*}
combine to give the uniform inequality
\[ N \mathrm{Vol}(B_{j+2}(x)) \lesssim \mathrm{Vol}(B_{j+2}(x)) \]
which, in turn, gives a uniform upper bound on $N$ because the volume is known to be nonzero.

Now on to the smooth structures:  the smooth nesting property follows immediately from theorem 6.4 with $c := r_2/r_1$.  Smooth engulfing is almost equally immediate.  Given two balls $B_{j}(x)$ and $B_{j'}(x')$ of comparable scale with a nontrivial intersection, there will be a third ball $B_{j''}(x'')$ of another comparable scale that contains them both.
Now
\[ \Phi_{x,\delta}^{-1} \circ \Phi_{x',\delta'} = ( \Phi^{-1}_{x'',\delta''} \circ \Phi_{x,\delta})^{-1} \circ ( \Phi^{-1}_{x'',\delta''} \circ \Phi_{x',\delta'}). \]
On this third ball, the map we have by pullbacks that
\[ \Phi_{x,\delta}^{-1} \circ \Phi_{x',\delta'} (u) = \exp \left( \sum_{i \in J(x',\delta')} u_i (\delta')^{d_i} (\delta'')^{-d_i} Y_i'' \right) \Phi_{x,\delta}^{-1}(x'). \]
We know that the pullback vector fields $Y_i''$ of $(\delta'')^{d_i} X_i$ via $\Phi_{x'',\delta''}$ are uniformly in $C^m$, so the mapping $ \Phi_{x,\delta}^{-1} \circ \Phi_{x',\delta'} (u)$ must be uniformly $C^m$ as well.  Moreover, if we choose $x'' = x$, we will have that $( \Phi^{-1}_{x'',\delta''} \circ \Phi_{x,\delta})^{-1}$ will also be uniformly in $C^m$ because of comparability of Jacobians
\begin{align*}
 |\det_{n_0(x) \times n_0(x)} d \Phi_{x,\delta}(u)| \approx & |\det_{n_0(x) \times n_0(x)} \delta X (x)| \approx |\det_{n_0(x) \times n_0(x)} \delta'' X (x)| \\ & \approx |\det_{n_0(x) \times n_0(x)} d \Phi_{x,\delta''}(u'')|; \end{align*}
these imply that the Jacobian determinant of $\Phi^{-1}_{x'',\delta''} \circ \Phi_{x,\delta}$ is uniformly bounded above and below, and Cramer's rule then implies that the inverse mapping will be uniformly in $C^m$ depending on the $C^m$ norm of the mapping $\Phi^{-1}_{x'',\delta''} \circ \Phi_{x,\delta}$ itself.

Finally, regarding the foliations, leaves, and measures, see appendices B and C of \cite{street2011}.  In particular, we have the formula
\[ \int_{B_j(x)} f d \mu_L = \int f \circ \Phi_{j,x}(t) |\det_{n_0(x) \times n_0(x)} (d \Phi(t))| dt \]
where $| \cdot |$ is the usual Euclidean length. The magnitude of $|\det_{n_0(x) \times n_0(x)} (d \Phi(t))|$ is shown by Street to be uniformly bounded above and below by the volume of the ball.  To show smoothness, we exploit lemma 4.16 and proposition 4.17 of \cite{street2011}, which together show that
\[ \frac{|\det_{n_0(x) \times n_0(x)} d \Phi_{x,\delta}(u)|}{ |\det_{n_0(x) \times n_0(x)} \delta X(\Phi_{x,\delta}(u))|} \]
is uniformly in $C^m$ on the ball.  Since the pullback vector fields $Y_i$ are uniformly in $C^m$ and uniformly span (meaning that there is an $n_0(x)$-tuple which when grouped into a matrix are uniformly close to the identity matrix, and in particular, have determinant uniformly bounded below), it suffices to show that
\[ \left| (\delta X)_{i_1} \cdots (\delta X)_{i_k} \det_{n_0(x) \times n_0(x)} (\delta X)(x) \right| \lesssim \left| \det_{n_0(x) \times n_0(x)} (\delta X)(x) \right| \]
uniformly for any choice of $i_1,\ldots,i_k$ with $k \leq m$.  From the proof of lemma 4.6 in \cite{street2011}, we see that we may write
\[ (\delta X)_{i_k} \det_{n_0(x) \times n_0(x)} (\delta X)(x) \] as some smooth matrix with admissible norm times $\det_{n_0(x) \times n_0(x)} (\delta X)(x)$ itself (admissible because the smooth functions appearing are literally those bounded by \eqref{admissible1} and \eqref{admissible2}).  The final result follows by induction on $k$.
\end{proof}

%Let $K_0$ be the subset of $K$ on which $n_0(x) = 0$.  Fix any point $x \in K \setminus K_0$; there must be some open neighborhood of $x$ on which some vector field $X_i$ is nonvanishing.  After making a $C^m$ change-of-variables (which will introduce a Jacobian factor of smoothness $C^{m-1}$, we may assume that this vector field $X_i$ coincides with the $d$-th coordinate direction.  It will also be the case that for any $i' \neq i$, there will be a function $c_{i'}$ (with the same degree of smoothness as the vector fields themselves, now $C^{m-1}$) such that
%\[ X_{i'} - c_{i'} \frac{\partial}{\partial x_d} \]
%annihilates $x_d$ for each $i'$.  Let $U$ be a small neighborhood of $x$ which has the form $U' \times I$ for some interval $I$.  Clearly on $U$, every leaf $L$ will have the form $L' \times I$, where $L'$ is itself a leaf of the foliation on $(x_1,\ldots,x_{d-1})$ generated by $X_{i'} - c_{i'} \frac{\partial}{\partial x_d}$ (with the final coordinate held constant).  Note that we know this new system of vector fields will still be involutive when restricted to the level sets of $x_d$ because each of the vector fields is tangent to these level sets.

\bibliography{mybib}

\def\cprime{$'$}
\providecommand{\bysame}{\leavevmode\hbox to3em{\hrulefill}\thinspace}
\providecommand{\MR}{\relax\ifhmode\unskip\space\fi MR }
% \MRhref is called by the amsart/book/proc definition of \MR.
\providecommand{\MRhref}[2]{%
  \href{http://www.ams.org/mathscinet-getitem?mr=#1}{#2}
}
\providecommand{\href}[2]{#2}
\begin{thebibliography}{10}

\bibitem{bnw1988}
Joaquim Bruna, Alexander Nagel, and Stephen Wainger, \emph{Convex hypersurfaces
  and {F}ourier transforms}, Ann. of Math. (2) \textbf{127} (1988), no.~2,
  333--365.

\bibitem{ccw1999}
Anthony Carbery, Michael Christ, and James Wright, \emph{Multidimensional van
  der {C}orput and sublevel set estimates}, J. Amer. Math. Soc. \textbf{12}
  (1999), no.~4, 981--1015.

\bibitem{cw2002}
Anthony Carbery and James Wright, \emph{What is van der {C}orput's lemma in
  higher dimensions?}, Proceedings of the 6th {I}nternational {C}onference on
  {H}armonic {A}nalysis and {P}artial {D}ifferential {E}quations ({E}l
  {E}scorial, 2000), no. Vol. Extra, 2002, pp.~13--26.

\bibitem{cw1971}
Ronald~R. Coifman and Guido Weiss, \emph{Analyse harmonique non-commutative sur
  certains espaces homog\`enes}, Springer-Verlag, Berlin, 1971, \'Etude de
  certaines int\'egrales singuli\`eres, Lecture Notes in Mathematics, Vol. 242.

\bibitem{cgp2010}
Tristan Collins, Allan Greenleaf, and Malabika Pramanik, \emph{A
  multi-dimensional resolution of singularities with applications to analysis},
  to appear in {A}m.{J}.{M}ath.

\bibitem{greenblatt2008}
Michael Greenblatt, \emph{An elementary coordinate-dependent local resolution
  of singularities and applications}, J. Funct. Anal. \textbf{255} (2008),
  no.~8, 1957--1994.

\bibitem{greenblatt2011}
\bysame, \emph{Resolution of singularities in two dimensions and the stability
  of integrals}, Adv. Math. \textbf{226} (2011), no.~2, 1772--1802.

\bibitem{gressman2008}
Philip~T. Gressman, \emph{Uniform estimates for cubic oscillatory integrals},
  Indiana Univ. Math. J. \textbf{57} (2008), no.~7, 3419--3442.

\bibitem{gressman2010II}
\bysame, \emph{Uniform geometric estimates of sublevel sets}, {J}. {A}nal.
  {M}ath. \textbf{115} (2011), 251--272.

\bibitem{ikm2007}
I.~Ikromov, M.~Kempe, and D.~M\"{u}ller, \emph{Sharp {$L^p$}-estimates for
  maximal operators and oscillation indices associated to hypersurfaces in
  {$\R^3$} for $p > 2$.}, arXiv:0706.1006.

\bibitem{iosevich1999}
Alex Iosevich, \emph{Fourier transform, {$L^2$} restriction theorem, and
  scaling}, Boll. Unione Mat. Ital. Sez. B Artic. Ric. Mat. (8) \textbf{2}
  (1999), no.~2, 383--387.

\bibitem{karpushkin1986}
V.~N. Karpushkin, \emph{A theorem concerning uniform estimates of oscillatory
  integrals when the phase is a function of two variables}, J. Soviet. Math.
  \textbf{35} (1986), 2809--2826.

\bibitem{nsw1985}
Alexander Nagel, Elias~M. Stein, and Stephen Wainger, \emph{Balls and metrics
  defined by vector fields. {I}. {B}asic properties}, Acta Math. \textbf{155}
  (1985), no.~1-2, 103--147.

\bibitem{ps1998}
D.~H. Phong and E.~M. Stein, \emph{Damped oscillatory integral operators with
  analytic phases}, Adv. Math. \textbf{134} (1998), no.~1, 146--177.

\bibitem{pss1999}
D.~H. Phong, E.~M. Stein, and J.~A. Sturm, \emph{On the growth and stability of
  real-analytic functions}, Amer. J. Math. \textbf{121} (1999), no.~3,
  519--554.

\bibitem{pss2001}
D.~H. Phong, E.~M. Stein, and Jacob Sturm, \emph{Multilinear level set
  operators, oscillatory integral operators, and {N}ewton polyhedra}, Math.
  Ann. \textbf{319} (2001), no.~3, 573--596.

\bibitem{ps2000}
D.~H. Phong and Jacob Sturm, \emph{Algebraic estimates, stability of local zeta
  functions, and uniform estimates for distribution functions}, Ann. of Math.
  (2) \textbf{152} (2000), no.~1, 277--329.

\bibitem{py2004}
Malabika Pramanik and Chan~Woo Yang, \emph{Decay estimates for weighted
  oscillatory integrals in {${\mathbb R}\sp 2$}}, Indiana Univ. Math. J.
  \textbf{53} (2004), no.~2, 613--645.

\bibitem{rogers2005}
Keith~M. Rogers, \emph{Sharp van der {C}orput estimates and minimal divided
  differences}, Proc. Amer. Math. Soc. \textbf{133} (2005), no.~12, 3543--3550
  (electronic).

\bibitem{seeger1998}
Andreas Seeger, \emph{Radon transforms and finite type conditions}, J. Amer.
  Math. Soc. \textbf{11} (1998), no.~4, 869--897.

\bibitem{street2011}
Brian Street, \emph{Multi-parameter {C}arnot-{C}arath\'eodory balls and the
  theorem of {F}robenius}, Rev. Mat. Iberoam. \textbf{27} (2011), no.~2,
  645--732.

\bibitem{tw2003}
Terence Tao and James Wright, \emph{{${L}\sp p$} improving bounds for averages
  along curves}, J. Amer. Math. Soc. \textbf{16} (2003), no.~3, 605--638.

\bibitem{varcenko1976}
A.~N. Var{\v{c}}enko, \emph{Newton polyhedra and estimates of oscillatory
  integrals}, Funkcional. Anal. i Prilo\v zen. \textbf{10} (1976), no.~3,
  13--38.

\end{thebibliography}

\end{document}